\theoremstyle{plain}
\newtheorem{thm}{Theorem}[section]
\newtheorem{lemm}[thm]{Lemma}
\theoremstyle{definition}
\newtheorem{rem}[thm]{Remark}
\renewcommand{\div}{\operatorname{div}}
\newcommand{\dB}{\dot{B}}
\newcommand{\supp}{\operatorname{supp}}
\renewcommand{\leq}{\leqslant}
\renewcommand{\geq}{\geqslant}
\newcommand{\uw}{u^{\rm w}}
\newcommand{\pw}{p^{\rm w}}
\newcommand{\Fw}{F^{\rm w}}
\newcommand{\n}[1]{{\left\|#1\right\|}}
\begin{document}
\title[Stationary Navier--Stokes equations on the half spaces]
{Stationary Navier--Stokes equations on the half spaces in the scaling critical framework}
\author[M.Fujii]{Mikihiro Fujii}
\address[M.Fujii]{Graduate School of Science, Nagoya City University, Nagoya, 467-8501, Japan}
\email[M.Fujii]{fujii.mikihiro@nsc.nagoya-cu.ac.jp}
\keywords{Stationary Navier--Stokes equations; the half spaces; scaling critical spaces}
\subjclass[2020]{35Q30, 42B37, 76D05}
\begin{abstract}
In this paper, we consider the inhomogeneous Dirichlet boundary value problem for the stationary Navier--Stokes equations in $n$-dimensional half spaces $\mathbb{R}^n_+= \{ x=(x',x_n)\ ;\ x' \in \mathbb{R}^{n-1}, x_n > 0 \}$ with $n \geq 3$
and prove the well-posedness\footnote{This means the unique existence of the small solutions for given small data.} in the scaling critical Besov spaces.
Our approach is to regard the system as an evolution equation for the normal variable $x_n$ and reformulate it as an integral equation.
Then, we achieve the goal by making use of the maximal regularity method that has developed in the context of nonstationary analysis in critical Besov spaces.
Furthermore, for the case of $n \geq 4$, we find that the asymptotic profile of the solution as $x_n \to \infty$ is given by the $(n-1)$-dimensional stationary Navier--Stokes flow.
\end{abstract}
\maketitle

\section{Introduction}\label{sec:intro}
In this paper, we consider the incompressible stationary Navier--Stokes equations in the half space $\mathbb{R}^n_+:= \{ x=(x',x_n)\ ;\ x'=(x_1,...,x_{n-1}) \in \mathbb{R}^{n-1}, x_n > 0 \}$ for $n \geq 3$ with the inhomogeneous Dirichlet boundary condition:  
\begin{align}\label{eq:NS}
    \begin{cases}
        -\Delta u + (u \cdot \nabla) u + \nabla p = \div F, & \qquad x \in \mathbb{R}^n_+,\\
        \div u = 0, &\qquad x \in \mathbb{R}^n_+,\\
        u(x',0) = a(x'), &\qquad x' \in \mathbb{R}^{n-1},
    \end{cases}
\end{align}
where $u=(u',u_{n})=(u_{1}(x),...,u_{n-1}(x),u_{n}(x))$ and $p=p(x)$ represent the unknown velocity field and pressure of the fluid, while the external force is assumed to be the divergence form $\div F$ for some given function $F=\{F_{k,\ell}(x) \}_{1 \leq k,\ell \leq n}$, and $a=(a',a_n)=(a_1(x'),...,a_{n-1}(x'),a_n(x'))$ denotes the given boundary data.
The aim of this paper is to show the existence of a unique small solution to \eqref{eq:NS} in the scaling critical framework.
Moreover, for the case of $n \geq 4$ and the external force $F$ is independent of the normal variable $x_n$, then it is revealed that the solution $u=u(x',x_n)$ converges to the solution $\bar{u}=\bar{u}(x')$ of the modified $(n-1)$-dimensional stationary Navier--Stokes equations \eqref{eq:NS_n-1} below.

For the whole space case $\mathbb{R}^n$ with $n \geq 3$, the stationary Navier--Stokes equations
\begin{align}\label{eq:NS-w}
    \begin{cases}
        -\Delta u + (u \cdot \nabla) u + \nabla p = \div F, & \qquad x \in \mathbb{R}^n,\\
        \div u = 0, &\qquad x \in \mathbb{R}^n
    \end{cases}
\end{align}
are well-investigated and
a number of results are known. The pioneering well-posedness results are given by
Leray \cite{Ler-33}, Ladyzhenskaya \cite{Lad-59}, and Fujita \cite{Fuj-61}.
The system \eqref{eq:NS-w} has the scaling invariant structure, that is if $u$ and $p$ solve \eqref{eq:NS-w} with some external force $F$, then the scaled functions 
\begin{align}\label{scaling-u-p}
    u_{\lambda}(x):=\lambda u(\lambda x),\qquad p_{\lambda}(x):=\lambda^2p(\lambda x)
\end{align}
also solve \eqref{eq:NS-w} with the scaled external force 
\begin{align}\label{scaling-F}
    F_{\lambda}(x):=\lambda^2F(\lambda x)
\end{align}
for all $\lambda >0$.
It is said that a function space is scaling critical if its norm is invariant under the above scaling transforms \eqref{scaling-u-p}-\eqref{scaling-F}.
It is well-known as the Fujita--Kato principle (see \cite{Fuj-Kat-64}) that it is important to consider the solvability of partial differential equations in the scaling critical function spaces.
For the results on \eqref{eq:NS-w} in the scaling critical spaces framework,
Chen \cite{Che-93} proved the well-posedness of \eqref{eq:NS-w} from small $F\in L^{\frac{n}{2}}(\mathbb{R}^n)$ to small $u \in L^n(\mathbb{R}^n)$.
Kaneko--Kozono--Shimizu \cite{Kan-Koz-Shi-19} proved that \eqref{eq:NS-w} is well-posed\footnote{In this paper, well-posedness means the unique existence of small solutions for given small data and the continuous dependence.} from small $F \in \dB_{p,r}^{\frac{n}{p}-2}(\mathbb{R}^n)$ to small $ u \in \dB_{p,r}^{\frac{n}{p}-1}(\mathbb{R}^n)$ for all $1 \leq p < n$ and $1 \leq r \leq \infty$, 
whereas
Tsurumi \cites{Tsu-19-JMAA,Tsu-19-ARMA} showed the ill-posedness for the case $p \geq n$.
For other related results,
see 
Kozono--Yamazaki \cites{Koz-Yam-95-PJA, Koz-Yam-95-IUMJ} for the well-posedness and stability in the scaling critical Morrey spaces,
Tsurumi \cite{Tsu-19-DIE} for the well-posedness in the scaling critical Triebel--Lizorkin spaces, 
and 
Heywood \cite{Hey-70}, Kozono--Shimizu \cite{Koz-Shi-23}, and Cunanan--Okabe--Tsutsui \cite{Cun-Oka-Tsu-22} for the large-time asymptotic stability for nonstationary flow around the stationary solutions.
Next, we focus on the known results of the nonstationary Navier--Stokes equations on the half spaces $\mathbb{R}^n_+$ ($n \geq 2$):
\begin{align}\label{eq:nNS}
    \begin{cases}
        \partial_t u - \Delta u + (u \cdot \nabla) u + \nabla p = 0, \qquad & t > 0, x \in \mathbb{R}^n_+,\\
        \div u = 0 \qquad & t \geq 0, x \in \mathbb{R}^n_+,\\
        u(t, x',0) = a(t,x'), \qquad & t \geq 0, x' \in \mathbb{R}^{n-1},\\
        u(0,x) = u_0(x), \qquad & x \in \mathbb{R}^n_+.
    \end{cases}
\end{align}
Since the pioneering work of Ukai \cite{Uka-87}, where the explicit formula for the linear solution is derived, the solvability of \eqref{eq:nNS} is well-investigated.
In \cite{Koz-87}, the well-posedness for \eqref{eq:nNS} with $a=0$ in the scaling critical space $L^n(\mathbb{R}^n_+)$ is proved.
Yamazaki \cite{Wat-24} extended this results to the wider class $L^{n,\infty}(\mathbb{R}^n_+)$. 
Recently, Watanabe \cite{Wat-24} considered the maximal regularity of the Stokes semigroup on the half space in homogeneous Besov spaces to prove that \eqref{eq:nNS} with $a=0$ is global well-posed in the scaling critical Besov spaces $\dB_{p,1}^{\frac{n}{p}-1}(\mathbb{R}^n_+)$ with $n-1 < p < \infty$.
For the results of inhomogeneous boundary data case $a \neq 0$, 
Lewis \cite{Lew-72} and Voss \cite{Vos-96} proved the global well-posedness for small data in the Lebesgue spaces framework.
In \cite{Fer-Fer-13}, it is shown that \eqref{eq:nNS} is global well-posed in the Morrey space $M_{p,n-p}(\mathbb{R}^n_+)$ with ($2 < p \leq n$), which is a wider class than $L^{n,\infty}(\mathbb{R}^n_+)$. 
More precisely, it was shown that for exponents $p$, $q$, and $r$ satisfying $2<p<q/p'<r<q<\infty$ and given initial data $u_0 \in M_{p,n-p}(\mathbb{R}^n_+)$ and boundary data $a \in C((0,\infty);M_{r,n-p}(\mathbb{R}^{n-1}) \cap M_{\frac{q}{p'},n-p}(\mathbb{R}^{n-1}))$ satisfying 
\begin{align}
    \n{u_0}_{M_{p,n-p}}
    \ll 1,
    \quad
    \sup_{t>0}
    t^{\frac{1}{2}-\frac{p-1}{2r}}
    \n{a(t)}_{M_{r,n-p}}
    +
    \sup_{t>0}
    t^{\frac{1}{2}-\frac{p}{2r}}
    \n{a(t)}_{M_{\frac{q}{p'},n-p}}
    \ll 1,
\end{align} 
there exists a unique small global solution.
Moreover, Chang and Jin \cites{Cha-Jin-15,Cha-Jin-16} proved the local and global well-posedness in the class $u_0 \in \dB_{q,q}^{-\frac{2}{q}}(\mathbb{R}^n_+)$ and $a \in L^q(0,T;\dB_{q,q}^{-\frac{1}{q}}(\mathbb{R}^{n-1}))+L^q(\mathbb{R}^{n-1};\dB_{q,q}^{-\frac{1}{2q}}(0,T))$ with $a_n \in L^q(0,T;\dB_{q,q}^{-\frac{1}{q}}(\mathbb{R}^{n-1}))$ for $n+2<q<\infty$.

In contrast to the known results mentioned above, there are few studies of stationary Navier--Stokes equations \eqref{eq:NS} in the half space cases, and in particular, there seems no solvability result for critical spaces in $\mathbb{R}^{n}_+$.
The aim of this paper is to consider the inhomogeneous Dirichlet boundary problem of the stationary Navier--Stokes equations \eqref{eq:NS} and show the well-posedness in critical Besov spaces framework.
Here, we note that the scaling transforms \eqref{scaling-u-p} and \eqref{scaling-F} are also invariant for the system \eqref{eq:NS} with the scaled Dirichlet data:
\begin{align}\label{scaling-a}
    a_{\lambda}(x):= \lambda a(\lambda x).
\end{align}
To achieve this, we regard \eqref{eq:NS} as an evolution equation for the vertical variable $x_n>0$ with regarding the boundary data $a=a(x')$ as the initial data. 
Then, we reformulate it into the integral equation via the Fourier transform for the tangential variables and the formula $\div F - (u \cdot \nabla )u = \div (F - u \otimes u)$:
\begin{align}\label{eq:int-1}
    u
    =
    \mathcal{U}^{\rm boundary}[a] + \mathcal{U}^{\rm force}[F - u \otimes u],
\end{align}
where 
$\mathcal{U}^{\rm boundary}[a](x)$ consists of the terms like $e^{-x_n|\nabla'|}a(x')$ 
and 
$\mathcal{U}^{\rm force}[G](x)$ is composed of the terms like $\int_0^{\infty} e^{-|x_n- y_n||\nabla'|}G(x',y_n) dy_n$.
As the actual representation of $\mathcal{U}^{\rm boundary}[a]$ and $\mathcal{U}^{\rm force}[G]$ are complicated, see Section \ref{sec:lin} for the detail. 
Thus, the integral equation \eqref{eq:int-1} is similar to the following integral equation:
\begin{align}
    v(x',x_n) 
    ={}
    e^{-x_n|\nabla'|}a(x')
    + 
    \int_0^{\infty} e^{-|x_n-y_n||\nabla'|}(f-v^2)(x',y_n)dy_n.
\end{align}
We establish the maximal regularity estimates of the semi-group $\{ e^{-x_n|\nabla'|} \}_{x_n>0}$ and the bilinear estimates in Besov spaces, so that we may construct a unique small solution to \eqref{eq:NS} in the scaling critical class
$u \in \widetilde{L^{\infty}_{x_n}}(\dB_{p,r}^{\frac{n-1}{p}-1})_{x'}(\mathbb{R}^n_+)$
for given small boundary data $a \in \dB_{p,r}^{\frac{n-1}{p}-1}(\mathbb{R}^{n-1})$ and small external force $F \in \widetilde{L^q_{x_n}}(\dB_{p,r}^{\frac{n-1}{p}+\frac{1}{q}-2})_{x'}(\mathbb{R}^{n}_+)$ for some $1 \leq p,q,r \leq \infty$. See Section \ref{sec:main} for the detail.
Our approach is different from that of Watanabe \cite{Wat-24} and enables us to treat the non-zero boundary data and consider the solutions in frameworks of different integrability exponents for $x'$-direction and $x_n$-direction. 
In particular, we obtain solutions that may not decay at $x_n \to \infty$.
Furthermore, we also consider the asymptotic behavior of the solution $u$ of \eqref{eq:NS} as $x_n \to \infty$ when the external force is independent of the normal variable $x_n>0$ and prove that $u$ approaches to the solution to $(n-1)$-dimensional stationary Navier--Stokes equations \eqref{eq:NS_n-1} in the scaling critical $\dB_{p,r}^{\frac{n-1}{p}-1}(\mathbb{R}^{n-1})$-norm.

Throughout this paper, we denote by $C$ and $c$ the constants, which may differ in each line. In particular, $C=C(*,...,*)$ denotes the constant which depends only on the quantities appearing in parentheses. 
For a vector $x =(x_1,...,x_n) \in \mathbb{R}^n$, we write $x':=(x_1,...,x_{n-1})$.
For an integrability exponent $1 \leq q \leq \infty$, we denote by $q':=q/(q-1)$ the H\"older conjugate of $q$.

This paper is organized as follows.
In the next section, we prepare some notation and state our main results precisely.
In Section \ref{sec:lin}, we derive the solution formula for the linear equations and establish some linear estimates.
In Section \ref{sec:pf}, we provide nonlinear estimates, so that we combine them and the results established in Section \ref{sec:lin} to prove our main theorems.

\section{Main results}\label{sec:main}
In this section, we state main results of this paper.
We first prepare some notations.
Let $n,d \in \mathbb{N}$ satisfy $n \geq 2$ and $1 \leq d \leq n$.
We denote by $\mathscr{S}(\mathbb{R}^d)$ the set of all Schwartz functions on $\mathbb{R}^d$ and define $\mathscr{S}'(\mathbb{R}^d)$ as the set of all tempered distributions on $\mathbb{R}^d$.
Let $\mathscr{F}_{\mathbb{R}^d}$ and $\mathscr{F}^{-1}_{\mathbb{R}^d}$ be the Fourier transform and inverse Fourier transform, respectively, defined as follows:
\begin{align}
    \mathscr{F}_{\mathbb{R}^d} [ f ](\xi) = \int_{\mathbb{R}^d} e^{-ix\cdot \xi}f(x) dx,\qquad
    \mathscr{F}_{\mathbb{R}^d}^{-1} [ f ](x) = \frac{1}{(2\pi)^d} \int_{\mathbb{R}^d} e^{ix\cdot \xi}f(\xi) d\xi
\end{align}
for $f \in \mathscr{S}(\mathbb{R}^d)$.
We use the abbreviation $\widehat{f}(\xi')=\mathscr{F}_{\mathbb{R}^{n-1}}[f](\xi')$ only for the case $d=n-1$.
Next, we recall the Littlewood--Paley decomposition $\{\Delta_j\}_{j\in \mathbb{Z}}$ on $\mathbb{R}^{n-1}$ defined by
\begin{align}
    \Delta_j f := \varphi_j * f, \qquad \varphi_j(x')=2^{(n-1)j}\varphi_0(2^jx'),
\end{align}
where $*$ denotes the convolution on $\mathbb{R}^{n-1}$ and 
$\varphi_0 \in \mathscr{S}(\mathbb{R}^{n-1})$ satisfies 
\begin{align}
    0 \leq \widehat{\varphi_0}(\xi') \leq 1, \quad
    \supp \widehat{\varphi_0} \subset \{ \xi' \in \mathbb{R}^{n-1}\ ;\ 2^{-1} \leq |\xi'| \leq 2 \},
\end{align}
and
\begin{align}
    \sum_{j \in \mathbb{Z}} \widehat{\varphi_0}(2^{-j}\xi') = 1\qquad {\rm for\ all\ }\xi' \in \mathbb{R}^{n-1} \setminus \{ 0 \}.
\end{align}
Then, the Besov spaces $\dB_{p,r}^s(\mathbb{R}^{n-1})$ ($s \in \mathbb{R}$, $1 \leq p,r \leq \infty$) are defined as 
\begin{align}
    \dB_{p,r}^s(\mathbb{R}^{n-1})
    :={}&
    \left\{
    f \in \mathscr{S}'(\mathbb{R}^{n-1})/\mathscr{P}(\mathbb{R}^{n-1})\ ;\ 
    \n{f}_{\dB_{p,r}^s} < \infty
    \right\},\\
    \n{f}_{\dB_{p,r}^s}
    :={}&
    \n{\left\{ 2^{sj}\n{\Delta_jf}_{L^p_{x'}(\mathbb{R}^{n-1})} \right\}_{j \in \mathbb{Z}}}_{\ell^{r}(\mathbb{Z})},
\end{align}
where $\mathscr{P}(\mathbb{R}^{n-1})$ denotes the set of all polynomials on $\mathbb{R}^{n-1}$.
See Sawano \cite{Saw-18} for the basic properties of Besov spaces.
Since we regard \eqref{eq:NS} as an evolution equation for $x_n \in (0,\infty)$ with the initial data $a=a(x')$,
we use the Chemin--Lerner spaces $\widetilde{L^q_{x_n}}(a,b;(\dB_{p,r}^{s})_{x'}(\mathbb{R}^{n-1}))$ ($0\leq a<b\leq \infty$, $1\leq p,q,r \leq \infty$, and $s \in \mathbb{R}$) defined as 
\begin{align}
    &\widetilde{L^q_{x_n}}(a,b;(\dB_{p,r}^{s})_{x'}(\mathbb{R}^{n-1}))\\
    &\qquad
    :={}
    \left\{
    f : (a,b) \to \mathscr{S}'(\mathbb{R}^{n-1})/\mathscr{P}(\mathbb{R}^{n-1})\ ;\ 
    \n{f}_{\widetilde{L^q_{x_n}}(a,b;(\dB_{p,r}^{s})_{x'})}
    < 
    \infty
    \right\},\\
    &\n{f}_{\widetilde{L^q_{x_n}}(a,b;(\dB_{p,r}^{s})_{x'})}
    :={}
    \n{\left\{ 2^{sj}\n{\Delta_jf}_{L^q_{x_n}(a,b;L^p_{x'}(\mathbb{R}^{n-1}))} \right\}_{j \in \mathbb{Z}}}_{\ell^{r}(\mathbb{Z})}.
\end{align}
Chemin--Lerner spaces were first introduced in \cite{Che-Ler-95} and are nowadays frequently used in the analysis of nonstationary viscous compressible fluid in scaling critical Besov spaces framework, which is started by Danchin \cite{Dan-00}. 
The reason why we use the Chemin--Lerner space is that this space enables us to obtain the maximal regularity estimates for evolution equations (see Lemma \ref{lemm:max-reg}).  
See \cite{Bah-Che-Dan-11} for properties of Chemin--Lerner spaces. 
We use the abbreviation 
\begin{align}
    \widetilde{L^q_{x_n}}(\dB_{p,r}^{s})_{x'}(\mathbb{R}^{n}_+):=
    \widetilde{L^q_{x_n}}(0,\infty;(\dB_{p,r}^{s})_{x'}(\mathbb{R}^{n-1})).
\end{align}

Now, the first main result of this paper reads as follows:
\begin{thm}\label{thm:1}
    Let $n\geq3$ be an integer.
    Let $1 \leq q,r\leq \infty$ and let
    \begin{align}
        1 \leq p <q_*'(n-1), 
        \qquad 
        q_*:=\max\{2,q\}, 
    \end{align}
    where $q_*':=q_*/(q_*-1)$ denotes the H\"older conjugate of $q_*$.
    We additionally assume that $q < \infty$ if $n=3$.
    Then, there exist positive constants $\delta_0=\delta_0(p,q,r)$ and $\varepsilon_0=\varepsilon_0(p,q,r)$ such that 
    for any boundary data
    $a \in \dB_{p,r}^{\frac{n-1}{p}-1}(\mathbb{R}^{n-1})$ and external forces
    $F \in \widetilde{L^q_{x_n}}(\dB_{p,r}^{\frac{n-1}{p}+\frac{1}{q}-2})_{x'}(\mathbb{R}^{n}_+)$
    satisfying 
    \begin{align}\label{small}
        \| a \|_{\dB_{p,r}^{\frac{n-1}{p}-1}}
        +
        \| F \|_{\widetilde{L^q_{x_n}}(\dB_{p,r}^{\frac{n-1}{p}+\frac{1}{q}-2})_{x'}}\leq \delta_0,
    \end{align}
    \eqref{eq:NS} possesses a solution $u 
    \in
    {} 
    \widetilde{L^{\infty}_{x_n}}(\dB_{p,r}^{\frac{n-1}{p}-1})_{x'}(\mathbb{R}^n_+) 
    \cap \widetilde{L^{q_*}_{x_n}}(\dB_{p,r}^{\frac{n-1}{p}+\frac{1}{q_*}-1})_{x'}(\mathbb{R}^n_+)$ satisfying 
    $\| u \|_{\widetilde{L^{q_*}_{x_n}}(\dB_{p,r}^{\frac{n-1}{p}+\frac{1}{q_*}-1})_{x'}} \leq \varepsilon_0.$
    Moreover, the solution is unique in the class
    \begin{align}\label{thm:class}
        \left\{
        u 
        \in
        {} 
        \widetilde{L^{q_*}_{x_n}}(\dB_{p,r}^{\frac{n-1}{p}+\frac{1}{q_*}-1})_{x'}(\mathbb{R}^n_+)\ ;\ 
        \| u \|_{\widetilde{L^{q_*}_{x_n}}(\dB_{p,r}^{\frac{n-1}{p}+\frac{1}{q_*}-1})_{x'}} \leq \varepsilon_0
        \right\}.
    \end{align}
\end{thm}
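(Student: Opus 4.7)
The plan is a standard Banach fixed-point argument applied to the integral reformulation \eqref{eq:int-1}. Set $Y := \widetilde{L^{q_*}_{x_n}}(\dB_{p,r}^{\frac{n-1}{p}+\frac{1}{q_*}-1})_{x'}(\mathbb{R}^n_+)$ and let $\mathcal{X}_{\varepsilon_0} := \{u \in Y \,;\, \n{u}_Y \leq \varepsilon_0\}$. I would show that, for $\delta_0$ small enough and $\varepsilon_0 \simeq \delta_0$, the map
\begin{align}
\Phi[u] := \mathcal{U}^{\rm boundary}[a] + \mathcal{U}^{\rm force}[F - u \otimes u]
\end{align}
sends $\mathcal{X}_{\varepsilon_0}$ into itself and is a strict contraction with respect to the $Y$-distance, producing the fixed point $u$.

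Two of the bounds on $\Phi$ are linear estimates, to be provided by Section \ref{sec:lin}: a Poisson-type estimate $\n{\mathcal{U}^{\rm boundary}[a]}_Y \lesssim \n{a}_{\dB_{p,r}^{\frac{n-1}{p}-1}}$ coming from the smoothing of the semigroup $\{e^{-x_n|\nabla'|}\}_{x_n>0}$, and a maximal regularity estimate
\begin{align}
\n{\mathcal{U}^{\rm force}[G]}_Y \lesssim \n{G}_{\widetilde{L^{\tilde q}_{x_n}}(\dB_{p,r}^{\frac{n-1}{p}+\frac{1}{\tilde q}-2})_{x'}}
\end{align}
valid for a suitable range of $\tilde q \in [1,q_*]$. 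I would use $\tilde q = q$ to absorb the external force $F$ and $\tilde q = q_*/2$ (which lies in $[1,q_*]$ precisely because $q_* \geq 2$) to absorb the nonlinearity.

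The third and main ingredient is the nonlinear bilinear estimate
\begin{align}
\n{u \otimes u}_{\widetilde{L^{q_*/2}_{x_n}}(\dB_{p,r}^{\frac{n-1}{p}+\frac{2}{q_*}-2})_{x'}} \lesssim \n{u}_Y^2,
\end{align}
which I would deduce from H\"older's inequality in $x_n$ and the Besov product law on $\dB_{p,r}^{s}(\mathbb{R}^{n-1})$ applied to two factors of regularity $s := \frac{n-1}{p}+\frac{1}{q_*}-1$; this produces output regularity $2s - \frac{n-1}{p} = \frac{n-1}{p}+\frac{2}{q_*}-2$, exactly as required. The product law needs $s_1 + s_2 = 2s > 0$, which rearranges to $p < q_*'(n-1)$, and $s < (n-1)/p$, which is automatic from $q_* \geq 2 > 1$. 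I expect this step---and the paraproduct bookkeeping it requires in the Chemin--Lerner framework---to be the principal technical obstacle; I anticipate the supplementary hypothesis $q<\infty$ when $n=3$ to enter here, at an endpoint of either this product law or of the maximal regularity estimate.

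Assembling the three bounds gives $\n{\Phi[u]}_Y \leq C_0 \delta_0 + C_1 \varepsilon_0^2$ and $\n{\Phi[u]-\Phi[v]}_Y \leq 2C_1 \varepsilon_0 \n{u-v}_Y$. Choosing $\varepsilon_0, \delta_0$ sufficiently small then makes $\Phi$ a contraction on $\mathcal{X}_{\varepsilon_0}$, producing a unique fixed point. Uniqueness in the larger class \eqref{thm:class} follows by writing $u - v = \mathcal{U}^{\rm force}[v \otimes v - u \otimes u]$ for two such solutions of norm $\leq \varepsilon_0$ and reapplying the bilinear and linear estimates. Finally, the additional membership $u \in \widetilde{L^{\infty}_{x_n}}(\dB_{p,r}^{\frac{n-1}{p}-1})_{x'}(\mathbb{R}^n_+)$ is obtained by estimating the right-hand side of $u = \Phi[u]$ directly in that space via the $q_* = \infty$ endpoint of the same linear estimates, together with the bilinear bound on $u \otimes u$ which is already in hand.
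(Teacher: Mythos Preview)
Your proposal is correct and matches the paper's own argument essentially point for point: the same fixed-point scheme on the ball in $\widetilde{L^{q_*}_{x_n}}(\dB_{p,r}^{\frac{n-1}{p}+\frac{1}{q_*}-1})_{x'}$, the linear estimate (Lemma~\ref{lemm:max-reg}) applied with input exponent $q$ for $F$ and $q_*/2$ for $u\otimes u$, the bilinear estimate (Lemma~\ref{lemm:bilin}) via paraproducts in the Chemin--Lerner scale, and the a posteriori $\widetilde{L^{\infty}_{x_n}}$ bound from the $q_1=\infty$ case of the maximal regularity. Your identification of where the restriction $p<q_*'(n-1)$ and the caveat $q<\infty$ for $n=3$ enter (namely in the bilinear/product estimate) is also exactly right.
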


\begin{rem}
We mention some remarks on Theorem \ref{thm:1}.
\begin{enumerate}
    \item 
    Considering the invariant scalings \eqref{scaling-u-p}, \eqref{scaling-F}, and \eqref{scaling-a}, we see that 
    \begin{align}
        \n{a_{\lambda}}_{\dB_{p,r}^{\frac{n-1}{p}-1}} 
        &= \n{a}_{\dB_{p,r}^{\frac{n-1}{p}-1}},\\
        \| F_{\lambda} \|_{\widetilde{L^q_{x_n}}(\dB_{p,r}^{\frac{n-1}{p}+\frac{1}{q}-2})_{x'}}
        &=
        \| F \|_{\widetilde{L^q_{x_n}}(\dB_{p,r}^{\frac{n-1}{p}+\frac{1}{q}-2})_{x'}},
    \end{align}
    and
    \begin{align}
        \| u_{\lambda} \|_{\widetilde{L^{\infty}_{x_n}}(\dB_{p,r}^{\frac{n-1}{p}-1})_{x'}}
        &=
        \| u \|_{\widetilde{L^{\infty}_{x_n}}(\dB_{p,r}^{\frac{n-1}{p}-1})_{x'}}\\
        \| u_{\lambda} \|_{\widetilde{L^{q_*}_{x_n}}(\dB_{p,r}^{\frac{n-1}{p}+\frac{1}{q_*}-1})_{x'}}
        &=
        \| u \|_{\widetilde{L^{q_*}_{x_n}}(\dB_{p,r}^{\frac{n-1}{p}+\frac{1}{q_*}-1})_{x'}}
    \end{align}
    for all dyadic numbers $\lambda>0$. Thus, our framework is scaling critical.
    \item Next, 
    let us compare Theorem \ref{thm:1} with the well-posedness results on the whole space case.
    Kaneko--Kozono--Shimizu \cite{Kan-Koz-Shi-19} proved that for $1 \leq p < n$ and $1 \leq r \leq \infty$ small external force $F \in \dB_{p,r}^{\frac{n}{p}-2}(\mathbb{R}^n)$, there exists a small unique solution $u \in \dB_{p,r}^{\frac{n}{p}-1}(\mathbb{R}^n)$ to \eqref{eq:NS-w}, 
    while 
    Tsurumi \cites{Tsu-19-JMAA,Tsu-19-ARMA} implies the range $1 \leq p < n$ is optimal. 
    In contrast, our result considers the different integrable exponents for $x'$-direction and $x_n$-direction, which enable us to construct solutions with partially weaker decay than the  solutions in \cite{Kan-Koz-Shi-19}. 
    In particular, if we may choose $q=\infty$ with $n \geq 4$, then the external force $F$ and the solution $u$ may not decay as $x_n \to \infty$.
\end{enumerate}
\end{rem}
In the next theorem, we investigate the behavior of the solution constructed in Theorem \ref{thm:1} in the case $q=\infty$ and deduce the asymptotic profile of the non-decaying solution as $x_n \to \infty$.
\begin{thm}\label{thm:2}
    Let $n \geq 4$ be an integer and let $p$ and $r$ satisfy
    \begin{align}
        1 \leq p < n-1, 
        \quad 
        1 \leq r < \infty.
    \end{align}
    Then, there exist positive constants $\delta_1=\delta_1(n,p,r) \leq \delta_0$ and $\varepsilon_1=\varepsilon_1(n,p,r) \leq \varepsilon_0$ such that 
    for the unique solution $u=u(x',x_n)$ to \eqref{eq:NS} constructed in Theorem \ref{thm:1}
    with the boundary data
    $a \in \dB_{p,r}^{\frac{n-1}{p}-1}(\mathbb{R}^{n-1})$ and the external force $\bar{F} = \{ \bar{F}_{k,\ell}(x') \}_{1 \leq k,\ell \leq n} \in \dB_{p,r}^{\frac{n-1}{p}-2}(\mathbb{R}^{n-1})$
    satisfying the smallness condition \eqref{small} with $\delta_0$ replaced by $\delta_1$,
    it holds  
    \begin{align}
        \lim_{R \to \infty} \n{u - \bar{u}}_{\widetilde{L^{\infty}_{x_n}}(R,\infty;(\dB_{p,r}^{\frac{n-1}{p}-1})_{x'})} = 0,    
    \end{align}
    where $\bar{u}=(\bar{u}',\bar{u}_n)=(\bar{u}_1(x'),...,\bar{u}_{n-1}(x'),\bar{u}_n(x')) \in \dB_{p,r}^{\frac{n-1}{p}-1}(\mathbb{R}^{n-1})$ is a unique solution to the following modified $(n-1)$-dimensional stationary Navier--Stokes equations
    \begin{align}\label{eq:NS_n-1}
        \begin{cases}
            -\Delta' \bar{u}' + (\bar{u}' \cdot \nabla')\bar{u}' + \nabla' \bar{p} =  \div' \bar{F}', \qquad & x' \in \mathbb{R}^{n-1},\\
            -\Delta' \bar{u}_n + (\bar{u}' \cdot \nabla')\bar{u}_n  =  \div' \bar{F}_n, \qquad & x' \in \mathbb{R}^{n-1},\\
            \div' \bar{u}' = 0, & x' \in \mathbb{R}^{n-1},
        \end{cases}
    \end{align}
    satisfying $\| \bar{u} \|_{\dB_{p,r}^{\frac{n-1}{p}-1}} \leq \varepsilon_1$, where we have set $\bar{F}':=\{ \bar{F}_{k,\ell} \}_{1 \leq k,\ell \leq n-1}$ and $\bar{F}_n:= (\bar{F}_{n,1},...,\bar{F}_{n,n-1})$. 
    Here, $\Delta'$, $\nabla'$, and $\div'$ are the $(n-1)$-dimensional Laplacian, gradient, and divergence, respectively, and $\bar{p} = \bar{p}(x')$ denotes the pressure in $\mathbb{R}^{n-1}$.
\end{thm}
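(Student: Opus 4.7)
The plan is to treat $u$ as a perturbation of the asymptotic profile $\bar{u}$, extended trivially in the normal variable, and then show that the deviation decays on $(R,\infty)$ by combining the exponential smoothing of the semigroup $\{e^{-x_n|\nabla'|}\}_{x_n>0}$ with the maximal regularity framework of Section \ref{sec:lin}.

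First I would construct $\bar{u}$. The tangential part $\bar{u}'$ solves the classical $(n-1)$-dimensional stationary Navier--Stokes system with forcing $\div' \bar{F}'$; since $p < n-1$, the critical well-posedness of Kaneko--Kozono--Shimizu \cite{Kan-Koz-Shi-19} furnishes a unique small $\bar{u}' \in \dB_{p,r}^{\frac{n-1}{p}-1}(\mathbb{R}^{n-1})$, together with an associated pressure $\bar{p}$. The normal component $\bar{u}_n$ then satisfies the linear convection--diffusion equation $-\Delta' \bar{u}_n + (\bar{u}' \cdot \nabla') \bar{u}_n = \div' \bar{F}_n$ with small drift $\bar{u}'$, which I would solve by a Banach fixed point centred on the Poisson solution of $-\Delta' \bar{u}_n = \div' \bar{F}_n$. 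Regarding $\bar{u}$ and $\bar{p}$ as $x_n$-independent functions on $\mathbb{R}^n_+$, the structure of \eqref{eq:NS_n-1} forces $(\bar{u},\bar{p})$ to satisfy $-\Delta \bar{u} + (\bar{u}\cdot\nabla)\bar{u} + \nabla \bar{p} = \div \bar{F}$ and $\div \bar{u} = 0$ in $\mathbb{R}^n_+$.

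Setting $w := u - \bar{u}$ and $q := p - \bar{p}$ and subtracting the two equations (together with $\div(\bar{u}\otimes w) = (w\cdot\nabla)\bar{u}$ and $\div(w\otimes\bar{u}) = (\bar{u}\cdot\nabla)w$, both consequences of $\div w = \div \bar{u} = 0$) yields
\begin{align}
    -\Delta w + \div(w\otimes\bar{u} + \bar{u}\otimes w + w\otimes w) + \nabla q = 0, \qquad \div w = 0,
\end{align}
with boundary trace $w(x',0) = a(x') - \bar{u}(x')$. The integral representation of Section \ref{sec:lin} then gives $w = \mathcal{U}^{\rm boundary}[a-\bar{u}] - \mathcal{U}^{\rm force}[w\otimes\bar{u} + \bar{u}\otimes w + w\otimes w]$. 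Writing $\phi(R) := \| w \|_{\widetilde{L^{\infty}_{x_n}}(R,\infty;(\dB_{p,r}^{\frac{n-1}{p}-1})_{x'})}$, the goal becomes $\phi(R) \to 0$ as $R \to \infty$.

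For the boundary contribution, each dyadic block $\Delta_j$ carries a factor $e^{-cR 2^j}$ on $(R,\infty)$, and the hypothesis $r < \infty$ permits dominated convergence in $\ell^r(\mathbb{Z})$, so $\|\mathcal{U}^{\rm boundary}[a-\bar{u}]\|_{\widetilde{L^{\infty}_{x_n}}(R,\infty;\dB_{p,r}^{\frac{n-1}{p}-1})} \to 0$. For the forcing contribution, I would split the $y_n$-integration at $R/2$: on $(0,R/2)$ the kernel $e^{-|x_n-y_n||\nabla'|}$ again supplies the factor $e^{-cR 2^j/2}$ and this piece vanishes by the same argument, while on $(R/2,\infty)$ the maximal regularity of Section \ref{sec:lin} localised to this half-line, combined with critical Besov product estimates, controls it by $\lesssim (\varepsilon_1 + \phi(R/2))\phi(R/2)$. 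Combining gives $\phi(R) \leq o(1) + C(\varepsilon_1 + \phi(R/2))\phi(R/2)$, and iterating with $\varepsilon_1$ small forces $\phi(R) \to 0$. The main obstacle is precisely this far-field estimate: because $\bar{u}$ is independent of $x_n$ it has infinite $L^q_{x_n}$ norm for every $q<\infty$, so one must remain within the $\widetilde{L^{\infty}_{x_n}}$ scale throughout and verify that the maximal regularity estimates of Section \ref{sec:lin} restrict cleanly to $(R/2,\infty)$ without any loss in the critical scaling.
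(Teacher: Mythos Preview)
Your proposal is correct and follows essentially the same route as the paper: derive the perturbation equation for $w=u-\bar{u}$, use the pointwise dyadic estimate \eqref{est-lin-1}, split the $y_n$-integral into a near piece (killed by the exponential factor and dominated convergence in $\ell^r$, which is where $r<\infty$ enters) and a far piece (absorbed by smallness via the $q_*=\infty$ bilinear estimate, which needs $n\geq4$ and $p<n-1$). The only cosmetic difference is that the paper introduces two thresholds $\widetilde{R}>R$ and first sends $\widetilde{R}\to\infty$ to make the near terms vanish exactly, obtaining $\limsup_{\widetilde{R}\to\infty}\phi(\widetilde{R})\leq C\varepsilon\,\phi(R)$, and then lets $R\to\infty$; your single-parameter split at $R/2$ leads to the equivalent inequality $\phi(R)\leq o(1)+C\varepsilon\,\phi(R/2)$ and the same conclusion by passing to the limit (no genuine iteration is needed, since $\phi$ is monotone).
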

\begin{rem}
Let us state some remarks on Theorem \ref{thm:2}.
\begin{itemize}
    \item [(1)]
    For the existence of a unique solution to \eqref{eq:NS_n-1}, it follows from the same arguments in \cite{Kan-Koz-Shi-19} that for $1 \leq p < n-1$, $1 \leq r \leq \infty$, and $n-1 \geq 3$ there exist positive constants $\delta_1$ and $\varepsilon_1$ such that for any $\bar{F} \in \dB_{p,r}^{\frac{n-1}{p}-2}(\mathbb{R}^{n-1})$ with $\| \bar{F} \|_{\dB_{p,r}^{\frac{n-1}{p}-2}} \leq \delta_1$, \eqref{eq:NS_n-1} possesses a unique solution $\bar{u} \in \dB_{p,r}^{\frac{n-1}{p}-1}(\mathbb{R}^{n-1})$ satisfying $\| \bar{u} \|_{\dB_{p,r}^{\frac{n-1}{p}-1}} \leq \varepsilon_1$.
    Here, we remark that in order to construct a unique solution of the stationary equations \eqref{eq:NS_n-1} by following the idea of \cite{Kan-Koz-Shi-19}, we need to assume $n -1 \geq 3$, that is $n \geq 4$.
    \item [(2)]
    For the case of $n=3$, the limit system \eqref{eq:NS_n-1} are the equations on the whole plane $\mathbb{R}^2$ and the author \cite{Fujii-pre}*{Corollary 1.4} proved that there exists a small external force that does not generate the small solution in $\dB_{p,1}^{\frac{2}{p}-1}(\mathbb{R}^2)$ ($1\leq p \leq 2$) to the stationary Navier--Stokes equations on the whole plane $\mathbb{R}^2$.
Therefore, \cite{Fujii-pre} implies that Theorem \ref{thm:2} may not hold for $n=3$.
\end{itemize}
\end{rem}

\section{Linear analysis}\label{sec:lin}
In this section, we derive the explicit formula and estimate for the solutions to the following linear equations:
\begin{align}\label{eq:lin-1}
    \begin{cases}
        -\Delta u + \nabla p = \div F, & \qquad x \in \mathbb{R}^n_+,\\
        \div u = 0, &\qquad x \in \mathbb{R}^n_+,\\
        u(x',0) = a(x'), &\qquad x' \in \mathbb{R}^{n-1}.
    \end{cases}
\end{align}
Let $\Fw = \{ \Fw_{k,\ell} \}_{1 \leq k,\ell \leq n}$ be a extension of $F$ to the whole spaces defined as 
\begin{align}
    \Fw_{k,\ell}(x)
    :={}&
    \begin{cases}
        F_{k,\ell}(x',x_n), & x_n \geq 0,\\
        F_{k,\ell}(x',-x_n), & x_n < 0,
    \end{cases}\\
    \Fw_{k,n}(x)
    :={}&
    \begin{cases}
        F_{k,n}(x',x_n), & x_n \geq 0,\\
        -F_{k,n}(x',-x_n), & x_n < 0
    \end{cases}
\end{align}
for $k=1,...,n$ and $\ell=1,...,n-1$.
Let $(\uw, \pw)$ satisfy
\begin{align}\label{eq:lin-2}
    \begin{cases}
        -\Delta \uw + \nabla \pw = \div \Fw, & \qquad x \in \mathbb{R}^n,\\
        \div \uw = 0, &\qquad x \in \mathbb{R}^n.
    \end{cases}
\end{align}
Let $v:=u-\uw$ and $q:=p-\pw$.
Then, $(v,q)$ should solve
\begin{align}\label{eq:lin-3}
    \begin{cases}
        -\Delta v + \nabla q = 0, & \qquad x \in \mathbb{R}^n_+,\\
        \div v = 0, &\qquad x \in \mathbb{R}^n_+,\\
        v(x',0) =b(x'), &\qquad x' \in \mathbb{R}^{n-1},
    \end{cases}
\end{align}
where $b(x'):=a(x') - \uw(x',0)$.
To obtain the explicit formula of the solutions to \eqref{eq:lin-3}, we use the idea of \cite{Uka-87}.
Applying $\div$ to the first equation of \eqref{eq:lin-3}, we see that $\Delta q = 0$, which implies 
$(|\xi'|^2 - \partial_{x_n}^2)\widehat{q} = 0$.
As $\widehat{q}(\cdot,x_n) \in \mathscr{S}'(\mathbb{R}^{n-1})$ for all $x_n >0$, we have
\begin{align}\label{q}
    (|\xi'| + \partial_{x_n})\widehat{q}=0.
\end{align}
Let $\widehat{w_n}(\xi',x_n):=(|\xi'| + \partial_{x_n})\widehat{v_n}(\xi',x_n) = |\xi'|\widehat{v_n}(\xi',x_n) - i\xi'\cdot \widehat{v'}(\xi',x_n)$.
Then, it holds
\begin{align}
    (|\xi'| - \partial_{x_n})\widehat{w_n} + \partial_{x_n} \widehat{q} = 0. 
\end{align}
Applying $(|\xi'| + \partial_{x_n})$ to this and using \eqref{q}, we see that 
\begin{align}
    (|\xi'|^2 - \partial_{x_n}^2)\widehat{w_n} = 0.
\end{align}
By $\widehat{w_n}(\cdot,x_n) \in \mathscr{S}'(\mathbb{R}^{n-1})$, it holds
\begin{align}
    \begin{cases}
        (|\xi'| + \partial_{x_n})\widehat{w_n} = 0, &\qquad x_n>0,\\
        \widehat{w_n}(\xi',0) = |\xi'|\widehat{b_n}(\xi') - i \xi' \cdot \widehat{b'}(\xi'), &\qquad x_n=0.
    \end{cases}
\end{align}
Solving this equation, we have $\widehat{w_n}(\xi',x_n) = e^{-x_n|\xi'|}|\xi'|\widehat{b_n}(\xi')$, which gives
\begin{align}\label{v_n}
    \widehat{v_n}(\xi',x_n) 
    = 
    e^{-x_n|\xi'|}
    \left\{
    ( 1 + x_n|\xi'|)\widehat{b_n}(\xi')
    -ix_n\xi'\cdot \widehat{b'}(\xi')
    \right\}.
\end{align}
Let $\widehat{w'}(\xi',x_n):=\widehat{v'}(\xi',x_n) + (i\xi'/|\xi'|)\widehat{v_n}(\xi',x_n)$.
Then, there holds
\begin{align}
    (|\xi'|^2 - \partial_{x_n}^2)\widehat{w'}
    &=
    (|\xi'|^2 - \partial_{x_n}^2)\widehat{v'} + \frac{i\xi'}{|\xi'|}(|\xi'|^2 - \partial_{x_n}^2)\widehat{v_n}\\
    &=
    - i\xi'\widehat{q} - \frac{i\xi'}{|\xi'|} \partial_{x_n}\widehat{q}\\
    &=
    -\frac{i\xi'}{|\xi'|}(|\xi'|+\partial_{x_n})\widehat{q}
    =
    0.
\end{align}
From $\widehat{w'}(\cdot,x_n) \in \mathscr{S}'(\mathbb{R}^{n-1})$, it follows that
\begin{align}
    \begin{dcases}
        (|\xi'| + \partial_{x_n})\widehat{w'} = 0, &\qquad x_n>0,\\
        \widehat{w'}(\xi',0) = \widehat{b'}(\xi') + \frac{i\xi'}{|\xi'|}\widehat{b_n}(\xi'), &\qquad x_n=0.
    \end{dcases}
\end{align}
Hence, we obtain 
\begin{align}
    \widehat{w'}(\xi',x_n) 
    = 
    e^{-x_n|\xi'|}
    \left( \widehat{b'}(\xi') + \frac{i\xi'}{|\xi'|}\widehat{b_n}(\xi') \right),
\end{align}
which implies 
\begin{align}\label{v'}
    \widehat{v'}(\xi',x_n)
    =
    e^{-x_n|\xi'|}
    \left\{ b'(\xi') - x_n\frac{\xi'}{|\xi'|} \left( \xi' \cdot \widehat{b'}(\xi')\right) + \frac{i\xi'}{|\xi'|}\widehat{b_n}(\xi') \right\}.
\end{align}
Hence, we see by \eqref{v_n} and \eqref{v'} that
\begin{align}
    \widehat{u'}(\xi',x_n)
    ={}&
    e^{-x_n|\xi'|}
    \left\{ a'(\xi') - x_n\frac{\xi'}{|\xi'|} \left( \xi' \cdot \widehat{a'}(\xi')\right) + \frac{i\xi'}{|\xi'|}\widehat{a_n}(\xi') \right\}\\
    &
    -
    e^{-x_n|\xi'|}
    \left\{ \widehat{(\uw)'}(\xi',0) - x_n\frac{\xi'}{|\xi'|} \left( \xi' \cdot \widehat{(\uw)'}(\xi',0)\right) + \frac{i\xi'}{|\xi'|}\widehat{\uw_n}(\xi',0) \right\}\\
    &
    +
    \widehat{(\uw)'}(\xi',x_n),\label{hu'-1}\\
    \widehat{u_n}(\xi',x_n) 
    ={}& 
    e^{-x_n|\xi'|}
    \left\{
    ( 1 + x_n|\xi'|)\widehat{a_n}(\xi')
    -ix_n\xi'\cdot \widehat{a'}(\xi')
    \right\}\\
    &
    -
    e^{-x_n|\xi'|}
    \left\{
    ( 1 + x_n|\xi'|)\widehat{\uw_n}(\xi',0)
    -ix_n\xi'\cdot \widehat{(\uw)'}(\xi',0)
    \right\}\\
    &
    +
    \widehat{\uw_n}(\xi',x_n).\label{hun-1}
\end{align}
In order to obtain the explicit formula for $\widehat{u}(\xi',x_n)$, we should compute $\widehat{\uw}(\xi',x_n)$.
Since $\uw=(-\Delta)^{-1}\mathbb{P}\div \Fw$, where $\mathbb{P} := I + \nabla \div (-\Delta)^{-1}$ denotes the Helmholtz projection on $\mathbb{R}^n$, we have
\begin{align}
    \mathscr{F}_{\mathbb{R}^n}\left[\uw_k\right](\xi)
    ={}&
    \frac{1}{|\xi|^2}
    \sum_{\ell,m=1}^n
    \left( \delta_{k,\ell} - \frac{\xi_k\xi_{\ell}}{|\xi|^2} \right)i \xi_{m} \mathscr{F}_{\mathbb{R}^n} \left[\Fw_{\ell,m}\right](\xi), \\
    ={}&
    \frac{1}{|\xi'|^2 + \xi_n^2}
    \sum_{m=1}^n
    i \xi_{m} \mathscr{F}_{\mathbb{R}^n} \left[\Fw_{k,m}\right](\xi)\\
    &
    -
    \frac{1}{(|\xi'|^2 + \xi_n^2)^2}\sum_{\ell,m=1}^n{i\xi_k\xi_{\ell}\xi_m}  \mathscr{F}_{\mathbb{R}^n} \left[\Fw_{\ell,m}\right](\xi), 
\end{align}
for $k=1,...,n$,
where $\xi = (\xi',\xi_n)$.
For $k=1,...,n-1$, we have
\begin{align}
    \mathscr{F}_{\mathbb{R}^n}\left[\uw_k\right](\xi)
    ={}
    &
    \frac{1}{|\xi'|^2 + \xi_n^2}
    \sum_{m=1}^{n-1}
    i \xi_m \mathscr{F}_{\mathbb{R}^n} \left[\Fw_{k,m}\right](\xi)\\
    &
    +
    \frac{i\xi_n}{|\xi'|^2 + \xi_n^2}
    \mathscr{F}_{\mathbb{R}^n} \left[\Fw_{k,n}\right](\xi)\\
    &
    -
    \frac{1}{(|\xi'|^2 + \xi_n^2)^2}\sum_{\ell,m=1}^{n-1}{i\xi_k\xi_{\ell}\xi_m}  \mathscr{F}_{\mathbb{R}^n} \left[\Fw_{\ell,m}\right](\xi)\\
    &
    -
    \frac{i\xi_n}{(|\xi'|^2 + \xi_n^2)^2}\sum_{\ell=1}^{n-1}
    {\xi_k\xi_{\ell}}  
    \mathscr{F}_{\mathbb{R}^n}\left[\Fw_{\ell,n}+\Fw_{n,\ell}\right](\xi)\\
    &
    -
    \frac{\xi_n^2}{(|\xi'|^2 + \xi_n^2)^2}
    i\xi_k
    \mathscr{F}_{\mathbb{R}^n}\left[\Fw_{n,n}\right](\xi) \label{Fukw}
\end{align}
and
\begin{align}
    \mathscr{F}_{\mathbb{R}^n}\left[\uw_n\right](\xi)
    ={}
    &
    \frac{1}{|\xi'|^2 + \xi_n^2}
    \sum_{m=1}^{n-1}
    i \xi_m \mathscr{F}_{\mathbb{R}^n} \left[\Fw_{n,m}\right](\xi)\\
    &
    +
    \frac{i\xi_n}{|\xi'|^2 + \xi_n^2}
    \mathscr{F}_{\mathbb{R}^n} \left[\Fw_{n,n}\right](\xi)\\
    &
    -
    \frac{i\xi_n}{(|\xi'|^2 + \xi_n^2)^2}\sum_{\ell,m=1}^{n-1}{\xi_{\ell}\xi_m}  \mathscr{F}_{\mathbb{R}^n} \left[\Fw_{\ell,m}\right](\xi)\\
    &
    -
    \frac{\xi_n^2}{(|\xi'|^2 + \xi_n^2)^2}\sum_{\ell=1}^{n-1}
    {i\xi_{\ell}}  
    \mathscr{F}_{\mathbb{R}^n}\left[\Fw_{\ell,n}+\Fw_{n,\ell}\right](\xi)\\
    &
    -
    \frac{i\xi_n^3}{(|\xi'|^2 + \xi_n^2)^2}
    \mathscr{F}_{\mathbb{R}^n}\left[\Fw_{n,n}\right](\xi).\label{Funw}
\end{align}
Here, as it holds
\begin{align}
    &
    \frac{i\xi_n}{|\xi'|^2 + \xi_n^2}
    \mathscr{F}_{\mathbb{R}^n} \left[\Fw_{n,n}\right](\xi)
    -
    \frac{i\xi_n^3}{(|\xi'|^2 + \xi_n^2)^2}
    \mathscr{F}_{\mathbb{R}^n}\left[\Fw_{n,n}\right](\xi)
    =
    \frac{i\xi_n|\xi'|^2}{(|\xi'|^2 + \xi_n^2)^2}
    \mathscr{F}_{\mathbb{R}^n} \left[\Fw_{n,n}\right](\xi),
\end{align}
and
\begin{align}
    &
    \frac{1}{|\xi'|^2 + \xi_n^2}
    \sum_{m=1}^{n-1}
    i \xi_m \mathscr{F}_{\mathbb{R}^n} \left[\Fw_{n,m}\right](\xi)
    -
    \frac{\xi_n^2}{(|\xi'|^2 + \xi_n^2)^2}\sum_{\ell=1}^{n-1}
    {i\xi_{\ell}}  
    \mathscr{F}_{\mathbb{R}^n}\left[\Fw_{n,\ell}\right](\xi)\\
    &
    \quad
    =
    \frac{|\xi'|^2}{(|\xi'|^2 + \xi_n^2)^2}\sum_{\ell=1}^{n-1}
    {i\xi_{\ell}}  
    \mathscr{F}_{\mathbb{R}^n}\left[\Fw_{n,\ell}\right](\xi)
\end{align}
we have
\begin{align}
    \mathscr{F}_{\mathbb{R}^n}\left[\uw_n\right](\xi)
    ={}
    &
    \frac{|\xi'|^2}{(|\xi'|^2 + \xi_n^2)^2}\sum_{\ell=1}^{n-1}
    {i\xi_{\ell}}  
    \mathscr{F}_{\mathbb{R}^n}\left[\Fw_{n,\ell}\right](\xi)\\
    &
    +
    \frac{i\xi_n|\xi'|^2}{(|\xi'|^2 + \xi_n^2)^2}
    \mathscr{F}_{\mathbb{R}^n} \left[\Fw_{n,n}\right](\xi)\\
    &
    -
    \frac{i\xi_n}{(|\xi'|^2 + \xi_n^2)^2}
    \sum_{\ell,m=1}^{n-1}{\xi_{\ell}\xi_m}  \mathscr{F}_{\mathbb{R}^n} \left[\Fw_{\ell,m}\right](\xi)\\
    &
    -
    \frac{\xi_n^2}{(|\xi'|^2 + \xi_n^2)^2}\sum_{\ell=1}^{n-1}
    {i\xi_{\ell}}  
    \mathscr{F}_{\mathbb{R}^n}\left[\Fw_{\ell,n}\right](\xi).
\end{align}
Then, taking the inverse Fourier transform of \eqref{Fukw} and \eqref{Funw} with respect to $\xi_n$ and using 
\begin{align}
    \mathscr{F}_{\mathbb{R}}^{-1}
    \left[ \frac{1}{|\xi'|^2 + \xi_n^2} \right](z_n) 
    &= 
    \frac{1}{2|\xi'|}e^{-|\xi'||z_n|},\\
    \mathscr{F}_{\mathbb{R}}^{-1}
    \left[ \frac{i\xi_n}{|\xi'|^2 + \xi_n^2} \right](z_n) 
    &= 
    -\frac{1}{2}\operatorname{sgn}(z_n)e^{-|\xi'||z_n|},\\
    \mathscr{F}_{\mathbb{R}}^{-1}
    \left[ \frac{1}{(|\xi'|^2 + \xi_n^2)^2} \right](z_n) 
    &= 
    \frac{1}{4|\xi'|^3}e^{-|\xi'||z_n|}(1+|\xi'||z_n|),\\
    \mathscr{F}_{\mathbb{R}}^{-1}
    \left[ \frac{i\xi_n}{(|\xi'|^2 + \xi_n^2)^2} \right](z_n) 
    &= 
    -
    \frac{z_n}{4|\xi'|}e^{-|\xi'||z_n|},\\
    \mathscr{F}_{\mathbb{R}}^{-1}
    \left[ \frac{\xi_n^2}{(|\xi'|^2 + \xi_n^2)^2} \right](z_n) 
    &= 
    \frac{1}{4|\xi'|}e^{-|\xi'||z_n|}(1-|\xi'||z_n|),
\end{align} 
we have
\begin{align}
    \widehat{\uw_k}(\xi',x_n)
    ={}
    &
    \frac{1}{2}
    \int_{\mathbb{R}}
    K^{(1)}(\xi',x_n-y_n)
    \sum_{m=1}^{n-1}
    \frac{i \xi_m }{|\xi'|}
    \widehat{\Fw_{k,m}}(\xi',y_n)
    dy_n\\
    &
    -
    \frac{1}{2}
    \int_{\mathbb{R}}
    K^{(2)}(\xi',x_n-y_n)
    \widehat{\Fw_{k,n}}(\xi',y_n)
    dy_n\\
    &
    -
    \frac{1}{4}
    \int_{\mathbb{R}}
    K^{(3)}(\xi',x_n-y_n)
    \sum_{\ell,m=1}^{n-1}
    \frac{i\xi_k\xi_{\ell}\xi_m}{|\xi'|^3}
    \widehat{\Fw_{\ell,m}}(\xi',y_n)dy_n\\
    &
    +
    \frac{1}{4}
    \int_{\mathbb{R}}
    K^{(4)}(\xi',x_n-y_n)
    \sum_{\ell=1}^{n-1}
    \frac{\xi_k\xi_{\ell}}{|\xi'|^2}  
    \left( \widehat{\Fw_{\ell,n}}+\widehat{\Fw_{n,\ell}} \right)(\xi',y_n)
    dy_n\\
    &
    -
    \frac{1}{4}
    \int_{\mathbb{R}}
    K^{(5)}(\xi',x_n-y_n)
    \frac{i\xi_k}{|\xi'|}
    \widehat{\Fw_{n,n}}(\xi',y_n)
    dy_n
\end{align}
for $k=1,...,n-1$ and 
\begin{align}
    \widehat{\uw_n}(\xi',x_n)
    ={}&
    \frac{1}{4}
    \int_{\mathbb{R}}
    K^{(3)}(\xi',x_n-y_n)
    \sum_{\ell=1}^{n-1}
    \frac{i\xi_{\ell}}{|\xi'|}
    \widehat{\Fw_{n,\ell}}(\xi',y_n)
    dy_n\\
    &
    +
    \frac{1}{4}
    \int_{\mathbb{R}}
    K^{(4)}(\xi',x_n-y_n)
    \sum_{\ell,m=1}^{n-1}
    \frac{\xi_{\ell}\xi_m}{|\xi'|^2}
    \widehat{\Fw_{\ell,m}}(\xi',y_n)
    dy_n\\
    &
    -
    \frac{1}{4}
    \int_{\mathbb{R}}
    K^{(4)}(\xi',x_n-y_n)
    \widehat{\Fw_{n,n}}(\xi',y_n)
    dy_n\\
    &
    -
    \frac{1}{4}
    \int_{\mathbb{R}}
    K^{(5)}(\xi',x_n-y_n)
    \sum_{\ell=1}^{n-1}
    \frac{i\xi_{\ell}}{|\xi'|}  
    \widehat{\Fw_{\ell,n}}(\xi',y_n)dy_n,
\end{align}
where we have defined five functions:
\begin{align}
    K^{(1)}(\xi',z_n)
    &=
    e^{-|\xi'||z_n|},\\
    K^{(2)}(\xi',z_n)
    &=
    \operatorname{sgn}(z_n)
    e^{-|\xi'||z_n|},\\
    K^{(3)}(\xi',z_n)
    &=
    (1+|\xi'||z_n|)
    e^{-|\xi'||z_n|},\\
    K^{(4)}(\xi',z_n)
    &=
    |\xi'|z_n
    e^{-|\xi'||z_n|},\\
    K^{(5)}(\xi',z_n)
    &=
    (1-|\xi'||z_n|)
    e^{-|\xi'||z_n|}.
\end{align}
For a locally integrable function $h=h(y_n):(0,\infty) \to \mathbb{C}$, we define 
\begin{align}
    L^{(j,\pm)}[h](\xi',x_n)
    :=
    \int_{0}^{\infty}
    \left(
    K^{(j)}(\xi',x_n-y_n)
    \pm
    K^{(j)}(\xi',x_n+y_n)
    \right)
    h(y_n)
    dy_n
\end{align}
for $j=1,2,3,4,5$.
Here, we note that there holds
\begin{align}
    \int_{\mathbb{R}}
    K^{(j,\pm)}(\xi',x_n-y_n)h^{(\pm)}(y_n)dy_n
    =
    L^{(j,\pm)}[h](\xi',x_n),
\end{align}
where
\begin{align}
    h^{(\pm)}(y_n)
    :=
    \begin{cases}
        h(y_n), & (y_n>0),\\
        \pm h(y_n), & (y_n<0).
    \end{cases}
\end{align}
Thus, we have
\begin{align}
    \widehat{\uw_k}(\xi',x_n)
    ={}
    &
    \frac{1}{2}
    \sum_{m=1}^{n-1}
    \frac{i \xi_m }{|\xi'|}
    L^{(1,+)}\left[ \widehat{F_{k,m}}(\xi',\cdot) \right](x_n)
    -
    \frac{1}{2}
    L^{(2,-)}\left[ \widehat{F_{k,n}}(\xi',\cdot) \right](x_n)\\
    &
    -
    \frac{1}{4}
    \sum_{\ell,m=1}^{n-1}
    \frac{i\xi_k\xi_{\ell}\xi_m}{|\xi'|^3}
    L^{(3,+)}\left[ \widehat{F_{\ell,m}}(\xi',\cdot) \right](x_n)\\
    &
    +
    \frac{1}{4}
    \sum_{\ell=1}^{n-1}
    \frac{\xi_k\xi_{\ell}}{|\xi'|^2}  
    \left( 
    L^{(4,+)}\left[ \widehat{F_{n,\ell}}(\xi',\cdot) \right](x_n)
    +
    L^{(4,-)}\left[ \widehat{F_{\ell,n}}(\xi',\cdot) \right](x_n)
    \right)\\
    &
    -
    \frac{1}{4}
    \frac{i\xi_k}{|\xi'|}
    L^{(5,-)}\left[ \widehat{F_{n,n}}(\xi',\cdot) \right](x_n)\label{hu'-2}
\end{align}
for $k=1,...,n-1$ and 
\begin{align}
    \widehat{\uw_n}(\xi',x_n)
    ={}&
    \frac{1}{4}
    \sum_{\ell=1}^{n-1}
    \frac{i\xi_{\ell}}{|\xi'|}
    L^{(3,+)}\left[ \widehat{F_{n,\ell}}(\xi',\cdot) \right](x_n)
    \\
    &
    +
    \frac{1}{4}
    \sum_{\ell,m=1}^{n-1}
    \frac{\xi_{\ell}\xi_m}{|\xi'|^2}
    L^{(4,+)}\left[ \widehat{F_{\ell,m}}(\xi',\cdot) \right](x_n)
    \\
    &
    -
    \frac{1}{4}
    L^{(4,-)}\left[ \widehat{F_{n,n}}(\xi',\cdot) \right](x_n)\\
    &
    -
    \frac{1}{4}
    \sum_{\ell=1}^{n-1}
    \frac{i\xi_{\ell}}{|\xi'|}  
    L^{(5,-)}\left[ \widehat{F_{\ell,n}}(\xi',\cdot) \right](x_n).\label{hun-2}
\end{align}
Hence, combining \eqref{hu'-1}, \eqref{hun-1}, \eqref{hu'-2}, and \eqref{hun-2}, 
we obtain the following theorem.
\begin{thm}
Let $n \geq 2$ be an integer.
Then, the solution $u=\left(u'(x),u_{n}(x)\right)=\left(u_{1}(x),...,u_{n-1}(x),u_{n}(x)\right)$ of \eqref{eq:NS} is given by
\begin{align}
    u
    ={}&
    \mathcal{U}^{\rm boundary}[a]
    +
    \mathcal{U}^{\rm force}[F].
\end{align}
Here, we have defined 
\begin{align}
    (\mathcal{U}^{\rm boundary})'[a](x)
    ={}&
    e^{-x_n|\nabla'|}
    \left[ a' + \mathcal{R}' \left(x_n \nabla' \cdot {a'}\right) + \mathcal{R}'{a_n} \right](x') 
    \label{U'-2}\\
    \mathcal{U}_n^{\rm boundary}[a](x) 
    ={}& 
    e^{-x_n|\nabla'|}
    \left[
    ( 1 + x_n|\nabla'|){a_n}
    -x_n\nabla'\cdot {a'}
    \right](x') 
\end{align}
and
\begin{align}
    \mathcal{U}^{\rm force}[F]:= - \mathcal{U}^{\rm boundary}[\uw[F](\cdot,0)] + \uw[F],
\end{align}
where $\uw = \uw[F]$ is the solution to \eqref{eq:lin-2} on the whole space, which is explicitly given by  
\begin{align}
    {\uw_k}[F]
    ={}
    &
    \frac{1}{2}
    \sum_{m=1}^{n-1}
    \mathcal{R}_m
    \mathcal{L}^{(1,+)}\left[ {F_{k,m}} \right]
    -
    \frac{1}{2}
    \mathcal{L}^{(2,-)}\left[ {F_{k,n}} \right]
    +
    \frac{1}{4}
    \sum_{\ell,m=1}^{n-1}
    \mathcal{R}_k\mathcal{R}_{\ell}\mathcal{R}_m
    \mathcal{L}^{(3,+)}\left[ {F_{\ell,m}} \right]\\
    &
    -
    \frac{1}{4}
    \sum_{\ell=1}^{n-1}
    \mathcal{R}_k\mathcal{R}_{\ell}  
    \left( 
    \mathcal{L}^{(4,+)}\left[ {F_{n,\ell}} \right]
    +
    \mathcal{L}^{(4,-)}\left[ {F_{\ell,n}} \right]
    \right)
    -
    \frac{1}{4}
    \mathcal{R}_k
    \mathcal{L}^{(5,-)}\left[ {F_{n,n}} \right]
\end{align}
for $k=1,...,n-1$ and 
\begin{align}
    {\uw_n}[F]
    ={}&
    \frac{1}{4}
    \sum_{\ell=1}^{n-1}
    \mathcal{R}_{\ell}
    \mathcal{L}^{(3,+)}\left[ {F_{n,\ell}} \right]
    -
    \frac{1}{4}
    \sum_{\ell,m=1}^{n-1}
    \mathcal{R}_{\ell}\mathcal{R}_m
    \mathcal{L}^{(4,+)}\left[ {F_{\ell,m}} \right]
    \\
    &
    -
    \frac{1}{4}
    \mathcal{L}^{(4,-)}\left[ {F_{n,n}} \right]
    -
    \frac{1}{4}
    \sum_{\ell=1}^{n-1}
    \mathcal{R}_{\ell}
    \mathcal{L}^{(5,-)}\left[ {F_{\ell,n}} \right].
\end{align}
Here, $\mathcal{R}'=(\mathcal{R}_1,...,\mathcal{R}_{n-1})$; $\mathcal{R}_{\ell}:=\partial_{x_\ell}/|\nabla'|$ $(\ell=1,...,n-1)$ are the Riesz transforms on $\mathbb{R}^{n-1}$, and $\mathcal{L}^{(j,\pm)}$ $(j=1,2,3,4,5)$ are defined as
\begin{align}
    \mathcal{L}^{(j,\pm)}[f](x)
    :={}&
    \mathscr{F}^{-1}_{\mathbb{R}^{n-1}_{\xi'}}
    \left[
    L^{(j,\pm)}
    \left[\widehat{f}(\xi',\cdot)\right](x_n)
    \right](x')\\
    ={}&
    \int_0^{\infty}
    \left( K^{(j)}(D',x_n-y_n) \pm K^{(j)}(D',x_n+y_n) \right)
    f(x',y_n)
    dy_n,
\end{align}
where, $K^{(j)}(D',x_n):=\mathscr{F}^{-1}_{\mathbb{R}^{n-1}}K^{(j)}(\xi',x_n)\mathscr{F}_{\mathbb{R}^{n-1}}$ are the Fourier multipliers on $\mathbb{R}^{n-1}$.
\end{thm}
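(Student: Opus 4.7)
The plan is to exploit linearity and split the Stokes problem into (i) a whole-space piece driven by an extension $\Fw$ of the force and (ii) a residual homogeneous Dirichlet problem whose boundary data absorbs the trace of the whole-space piece. Concretely, first extend $F$ to $\mathbb{R}^n$ with the even/odd parity rules dictated by the normal component (even for $F_{k,\ell}$ with $\ell \leq n-1$, odd for $F_{k,n}$), solve \eqref{eq:lin-2} via the Helmholtz projection to obtain the Fourier-side formula $\widehat{\uw} = (-\Delta)^{-1}\mathbb{P}\,\div \Fw$, and set $v := u - \uw$, so that $v$ solves the homogeneous Stokes system \eqref{eq:lin-3} with boundary data $b := a - \uw(\cdot, 0)$.

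For the homogeneous part, I would follow the Ukai-type reduction already sketched in the excerpt: harmonicity of $q$ combined with the requirement $\widehat{q}(\cdot,x_n)\in\mathscr{S}'(\mathbb{R}^{n-1})$ forces $(|\xi'|+\partial_{x_n})\widehat{q}=0$; introducing the auxiliaries $\widehat{w_n}:=(|\xi'|+\partial_{x_n})\widehat{v_n}$ and $\widehat{w'}:=\widehat{v'}+(i\xi'/|\xi'|)\widehat{v_n}$ decouples the ODEs in $x_n$ and leaves first-order equations driven only by $\widehat{b}$. Solving explicitly yields the tangential Poisson-type semigroup $e^{-x_n|\xi'|}$ acting on combinations of $\widehat{b'}$ and $\widehat{b_n}$; substituting $b = a$ recovers $\mathcal{U}^{\rm boundary}[a]$ with the Riesz transforms $\mathcal{R}_\ell$ appearing from the $i\xi'/|\xi'|$ factors.

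For the whole-space piece, I would invert the $\xi_n$-multipliers in $\widehat{\uw}$ via the standard identities for $\mathscr{F}^{-1}_{\xi_n}[1/(|\xi'|^2+\xi_n^2)^k]$ and $\mathscr{F}^{-1}_{\xi_n}[i\xi_n/(|\xi'|^2+\xi_n^2)^k]$, $k=1,2$, producing precisely the five kernels $K^{(j)}(\xi',z_n)$. Because $\Fw$ is built by even/odd reflection, each convolution in $y_n$ over $\mathbb{R}$ folds into an integral over $(0,\infty)$ of $F(x',y_n)$ against $K^{(j)}(\xi',x_n-y_n)\pm K^{(j)}(\xi',x_n+y_n)$, with the sign determined by the parity of the component involved. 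This is exactly the operator $\mathcal{L}^{(j,\pm)}$, and collecting terms produces the stated formula for $\uw[F]$.

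Finally, adding the two contributions gives
\begin{align}
u = \uw[F] + \mathcal{U}^{\rm boundary}[a - \uw[F](\cdot,0)] = \mathcal{U}^{\rm boundary}[a] + \bigl(\uw[F] - \mathcal{U}^{\rm boundary}[\uw[F](\cdot,0)]\bigr),
\end{align}
and the parenthesized term is by definition $\mathcal{U}^{\rm force}[F]$. The main bookkeeping obstacle will be matching parities with signs: one must check component by component that the chosen extension rule for each $F_{k,\ell}$ pairs with the correct $\pm$ sign in $\mathcal{L}^{(j,\pm)}$ after the factors of $i\xi_n$ arising from $\mathbb{P}\,\div$ interact with the odd-extended normal components. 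A secondary subtlety is ensuring that $\uw[F](\cdot,0)$ is well-defined as a trace in the relevant distributional sense, so that the cancellation producing $\mathcal{U}^{\rm force}[F]$ is not merely formal; this is a separate functional-analytic check that will be taken up in the estimates of the next section.
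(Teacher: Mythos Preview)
Your proposal is correct and follows essentially the same route as the paper: the same even/odd extension of $F$, the same splitting $u=\uw+v$ with $v$ solving the homogeneous Stokes problem with boundary data $a-\uw(\cdot,0)$, the same Ukai auxiliaries $\widehat{w_n}=(|\xi'|+\partial_{x_n})\widehat{v_n}$ and $\widehat{w'}=\widehat{v'}+(i\xi'/|\xi'|)\widehat{v_n}$, and the same $\xi_n$-inversion producing the five kernels $K^{(j)}$ which fold into $\mathcal{L}^{(j,\pm)}$ via the parity of the extension. The bookkeeping caveat you flag about matching parities to the $\pm$ signs is exactly the computation the paper carries out term by term.
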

\begin{rem}
    Since the divergence-free condition yields $(u \cdot \nabla)u = \div(u \otimes u)$, we see that the first equation of \eqref{eq:NS} is equivalent
    \begin{align}
        -\Delta u + \nabla p = \div (F - u \otimes u ).
    \end{align}
    Thus, we may rewrite \eqref{eq:NS} as
    \begin{align}
        u = \mathcal{U}^{\rm boundary}[a] + \mathcal{U}^{\rm force}[F - u \otimes u].
    \end{align}
\end{rem}

Next, we consider the estimate of the linear solution to \eqref{eq:lin-1}.
First of all, we recall the following estimate.
\begin{lemm}[\cite{Iwa-15}]\label{lemm:dissp}
    Let $n \geq 2$ be an integer.
    Then, there exist positive constants $c=c(n)$ and $C=C(n)$ such that 
    for any $1 \leq p \leq \infty$ and $j \in \mathbb{Z}$, it holds
    \begin{align}
        \n{\Delta_j e^{-|\nabla'|x_n}f}_{L^p_{x'}} \leq C e^{-c 2^j x_n } \n{ \Delta_j f}_{L^p_{x'}}  
    \end{align}
    for all $x_n>0$ and $f \in \mathscr{S}'(\mathbb{R}^{n-1})$ with $\Delta_j f \in L^p(\mathbb{R}^{n-1})$.
\end{lemm}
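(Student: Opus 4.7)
The plan is to write $\Delta_j e^{-|\nabla'|x_n}f$ as convolution of $\Delta_j f$ against an explicit kernel whose $L^1$ norm carries the required exponential decay, and then conclude by Young's inequality. Since $\Delta_j$ is a Fourier multiplier with symbol supported in the annulus $\{2^{j-1} \leq |\xi'| \leq 2^{j+1}\}$, I would first pick a fattened cutoff $\widetilde{\chi} \in C_c^{\infty}(\mathbb{R}^{n-1})$ with $\widetilde{\chi} \equiv 1$ on $\{1/2 \leq |\eta'| \leq 2\}$ and $\supp \widetilde{\chi} \subset \{1/4 \leq |\eta'| \leq 4\}$, so that $\widetilde{\chi}(2^{-j}\xi') \equiv 1$ on the Fourier support of $\Delta_j f$. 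This yields
\begin{align}
\Delta_j e^{-|\nabla'|x_n}f = K_{j, x_n} * \Delta_j f, \qquad K_{j, x_n}(x') := \mathscr{F}_{\mathbb{R}^{n-1}}^{-1}\bigl[\widetilde{\chi}(2^{-j}\xi') e^{-x_n|\xi'|}\bigr](x'),
\end{align}
so by Young's convolution inequality it suffices to establish the kernel bound $\n{K_{j,x_n}}_{L^1(\mathbb{R}^{n-1})} \leq C e^{-c 2^j x_n}$.

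The next step is to exploit the natural scaling. Changing variables $\xi' = 2^j \eta'$ gives $K_{j, x_n}(x') = 2^{(n-1)j} k_t(2^j x')$, where $t := 2^j x_n$ and $k_t(y') := \mathscr{F}_{\mathbb{R}^{n-1}}^{-1}[\widetilde{\chi}(\eta') e^{-t|\eta'|}](y')$. In particular $\n{K_{j,x_n}}_{L^1} = \n{k_t}_{L^1}$, so everything reduces to the $j$-independent bound
\begin{align}
\n{k_t}_{L^1(\mathbb{R}^{n-1})} \leq C e^{-ct} \qquad \text{for all } t > 0.
\end{align}

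For this, the key point is that $|\eta'|$ is smooth on $\supp \widetilde{\chi}$ and bounded below by $1/4$, so $e^{-t|\eta'|} \leq e^{-t/4}$ uniformly on the support. Moreover, every partial derivative satisfies $|\partial^\alpha[\widetilde{\chi}(\eta') e^{-t|\eta'|}]| \leq C_\alpha (1+t)^{|\alpha|} e^{-t/4}$ on the (compact) annular support. Standard integration by parts against $e^{iy' \cdot \eta'}$ therefore yields, for each integer $N \geq 0$,
\begin{align}
|k_t(y')| \leq C_N (1+t)^N e^{-t/4} (1+|y'|)^{-N},
\end{align}
and choosing $N = n$ gives $\n{k_t}_{L^1} \leq C(1+t)^n e^{-t/4} \leq C' e^{-t/8}$, which is the desired estimate (with $c = 1/8$, say).

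I do not anticipate a substantive obstacle. The only point requiring some care is tracking the polynomial factor $(1+t)^n$ coming from differentiating $e^{-t|\eta'|}$ a total of $n$ times; this factor is absorbed into the exponential $e^{-t/4}$ at the final step at the cost of slightly shrinking the constant $c$. Everything else is routine Fourier analysis.
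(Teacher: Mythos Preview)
Your argument is correct and is the standard way to prove this dissipation estimate: fatten the Littlewood--Paley cutoff, rescale to $j=0$, and control the $L^1$ norm of the resulting kernel $k_t$ via integration by parts on the annulus, absorbing the polynomial loss $(1+t)^n$ into $e^{-t/4}$. The only minor point is that you need $N>n-1$ (not merely $N\geq 0$) to make $(1+|y'|)^{-N}$ integrable on $\mathbb{R}^{n-1}$; your choice $N=n$ does this.

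As for comparison with the paper: there is no proof to compare with. The paper simply imports this lemma from \cite{Iwa-15} and does not reproduce the argument, so your write-up supplies what the paper omits. The approach you give is essentially the one behind the cited result.
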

Making use of Lemma \ref{lemm:dissp} and the Bernstein inequality, we obtain the following lemma.
\begin{lemm}\label{lemm:max-reg}
    Let $n \geq 2$ be an integer.
    There exists a positive constant $C=C(n)$ such that for $1 \leq p,r \leq \infty$, and $1 \leq q \leq q_1 \leq \infty$, the solution $u$ to \eqref{eq:lin-1} with the boundary data $a \in \dB_{p,r}^{\frac{n-1}{p}-1}(\mathbb{R}^{n-1})$ and the external force $F \in \widetilde{L^q_{x_n}}(\dB_{p,r}^{\frac{n-1}{p}+\frac{1}{q}-2})_{x'}(\mathbb{R}^n_+)$ satisfies
    \begin{align}
        \n{u}_{\widetilde{L^{q_1}_{x_n}}(\dB_{p,r}^{\frac{n-1}{p}+\frac{1}{q_1}-1})_{x'}}
        \leq
        C
        \left(
        \n{a}_{\dB_{p,r}^{\frac{n-1}{p}-1}}
        +
        \n{F}_{\widetilde{L^q_{x_n}}(\dB_{p,r}^{\frac{n-1}{p}+\frac{1}{q}-2})_{x'}}
        \right).
    \end{align}
\end{lemm}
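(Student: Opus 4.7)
My plan is to exploit the explicit representation $u = \mathcal{U}^{\rm boundary}[a] + \mathcal{U}^{\rm force}[F]$ derived in the preceding theorem and reduce everything to frequency-localized convolution estimates in the normal variable $x_n$ that can be handled by Young's inequality combined with Lemma~\ref{lemm:dissp}. The key structural observation is that every kernel appearing in the formulas, namely $K^{(j)}(\xi',z_n)$ for $j=1,\dots,5$, is bounded by $C(1+|\xi'||z_n|)e^{-|\xi'||z_n|}$, so that after applying $\Delta_{j}$ (whose symbol is supported on $|\xi'|\sim 2^{j}$) the polynomial prefactor is absorbed and we get the uniform pointwise-in-$y_n$ bound $\|\Delta_{j}K^{(j)}(D',x_n-y_n)f\|_{L^p_{x'}}\leq Ce^{-c2^{j}|x_n-y_n|}\|\Delta_{j}f\|_{L^p_{x'}}$, together with its $(x_n+y_n)$ analogue, by a direct adaptation of Lemma~\ref{lemm:dissp}.

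For the boundary contribution I will apply $\Delta_{j}$ to the explicit expressions for $\mathcal{U}^{\rm boundary}[a]$. The Riesz transforms $\mathcal{R}_{\ell}$ are bounded on each dyadic block by Bernstein's inequality, and the factor $x_n|\nabla'|$ in the expressions is harmless because on $\Delta_{j}$ it converts to $2^{j}x_n e^{-c 2^{j}x_n}\leq C e^{-c'2^{j}x_n}$. Lemma~\ref{lemm:dissp} then gives $\|\Delta_{j}\mathcal{U}^{\rm boundary}[a](\cdot,x_n)\|_{L^p_{x'}}\leq Ce^{-c2^{j}x_n}\|\Delta_{j}a\|_{L^p_{x'}}$. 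Taking the $L^{q_1}$ norm in $x_n$ yields the gain $\|e^{-c2^{j}\cdot}\|_{L^{q_1}_{x_n}(0,\infty)}\sim 2^{-j/q_1}$, so that after multiplying by $2^{j(\frac{n-1}{p}+\frac{1}{q_1}-1)}$ and taking the $\ell^r$ norm in $j$ I recover exactly $\|a\|_{\dB_{p,r}^{\frac{n-1}{p}-1}}$.

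For the forcing contribution I will split $\mathcal{U}^{\rm force}[F]=-\mathcal{U}^{\rm boundary}[\uw[F](\cdot,0)]+\uw[F]$ and treat the two pieces separately. For $\uw[F]$, each scalar component is a finite sum of terms of the form $\mathcal{R}^{\alpha}\mathcal{L}^{(j,\pm)}[F_{k,\ell}]$ with the Riesz symbols being Bernstein-tame. The kernel bound above yields
\begin{align}
    \|\Delta_{j}\uw[F](\cdot,x_n)\|_{L^p_{x'}}
    \leq
    C\int_{0}^{\infty}e^{-c2^{j}|x_n-y_n|}\|\Delta_{j}F(\cdot,y_n)\|_{L^p_{x'}}\,dy_n,
\end{align}
so I can apply Young's inequality in $x_n$ with exponents satisfying $1+\frac{1}{q_1}=\frac{1}{q_0}+\frac{1}{q}$; the hypothesis $q\leq q_1$ guarantees $q_0\in[1,\infty]$, and $\|e^{-c2^{j}|\cdot|}\|_{L^{q_0}(\mathbb{R})}\sim 2^{-j/q_0}=2^{-j(1+1/q_1-1/q)}$. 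Multiplying by $2^{j(\frac{n-1}{p}+\frac{1}{q_1}-1)}$ produces precisely the weight $2^{j(\frac{n-1}{p}+\frac{1}{q}-2)}$, and the $\ell^r$ summation then gives the desired bound by $\|F\|_{\widetilde{L^q_{x_n}}(\dB_{p,r}^{\frac{n-1}{p}+\frac{1}{q}-2})_{x'}}$. The trace $\uw[F](\cdot,0)$ is controlled by the same computation specialized to $x_n=0$, which yields $\|\uw[F](\cdot,0)\|_{\dB_{p,r}^{\frac{n-1}{p}-1}}\lesssim \|F\|_{\widetilde{L^q_{x_n}}(\dB_{p,r}^{\frac{n-1}{p}+\frac{1}{q}-2})_{x'}}$, after which the boundary estimate already established disposes of $\mathcal{U}^{\rm boundary}[\uw[F](\cdot,0)]$.

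The principal technical point, and the one where I would be most careful, is verifying the frequency-localized kernel bound uniformly across all five kernels $K^{(j)}$ and both sign choices $\pm$: the kernels $K^{(3)},K^{(4)},K^{(5)}$ carry an extra factor $|\xi'||z_n|$, and although pointwise $te^{-t}\leq e^{-t/2}$ converts this into mere exponential decay, one must confirm that the resulting multiplier still defines a bounded operator on $L^p$ uniformly in $y_n$ and in the dyadic block, which follows from the fact that its symbol is analytic in a fixed complex strip after the rescaling $\xi'\mapsto 2^{-j}\xi'$. Once this uniform bound is in hand, the rest is bookkeeping of exponents in Young's and Bernstein's inequalities.
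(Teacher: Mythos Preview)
Your proposal is correct and follows essentially the same approach as the paper: both reduce to the frequency-localized pointwise bound $\n{\Delta_j u(\cdot,x_n)}_{L^p_{x'}}\leq Ce^{-c2^jx_n}\n{\Delta_j a}_{L^p_{x'}}+C\int_0^\infty e^{-c2^j|x_n-y_n|}\n{\Delta_j F(\cdot,y_n)}_{L^p_{x'}}\,dy_n$ and then apply Young's inequality in $x_n$. The only cosmetic difference is that the paper absorbs the trace term $e^{-c2^jx_n}\n{\Delta_j\uw[F](\cdot,0)}_{L^p_{x'}}$ directly into the convolution integral via $e^{-c2^j(x_n+y_n)}\leq e^{-c2^j|x_n-y_n|}$, whereas you first bound $\n{\uw[F](\cdot,0)}_{\dB_{p,r}^{\frac{n-1}{p}-1}}$ and then feed it through the boundary estimate; the two routes are equivalent.
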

\begin{proof}
By the Bernstein inequality and Lemma \ref{lemm:dissp}, we have
\begin{align}
    \n{\Delta_j u(\cdot,x_n)}_{L^p_{x'}}
    \leq{}&
    Ce^{-c2^jx_n}\left( 1 + 2^j x_n \right)\n{\Delta_j a}_{L^p_{x'}}\\
    &+
    Ce^{-c2^jx_n}\left( 1 + 2^j x_n \right)\n{\Delta_j \uw(\cdot,0)}_{L^p_{x'}}
    +
    C\n{\Delta_j \uw(\cdot,x_n)}_{L^p_{x'}}\\
    \leq{}&
    Ce^{-c2^jx_n}\n{\Delta_j a}_{L^p_{x'}}
    +
    Ce^{-c2^jx_n}\n{\Delta_j \uw(\cdot,0)}_{L^p_{x'}}\\
    &+
    C\sum_{k=1}^5\n{\Delta_j \mathcal{L}^{(k,\pm)}[F](\cdot,x_n)}_{L^p_{x'}}.
\end{align}
For the estimate of $\uw(\cdot,0)$, it holds
\begin{align}
    &{\uw_k}(x',0)
    ={}
    \frac{1}{2}
    \sum_{m=1}^{n-1}
    \mathcal{R}_m
    \mathcal{L}^{(1,+)}\left[ {F_{k,m}} \right](x',0)
    +
    \frac{1}{2}
    \mathcal{L}^{(1,+)}\left[ {F_{k,n}} \right](x',0)\\
    &\quad
    +
    \frac{1}{4}
    \sum_{\ell,m=1}^{n-1}
    \mathcal{R}_k\mathcal{R}_{\ell}\mathcal{R}_m
    \mathcal{L}^{(3,+)}\left[ {F_{\ell,m}} \right](x',0)
    -
    \frac{1}{4}
    \sum_{\ell=1}^{n-1}
    \mathcal{R}_k\mathcal{R}_{\ell}  
    \mathcal{L}^{(4,-)}\left[ {F_{\ell,n}} \right](x',0)
\end{align}
for $k=1,...,n-1$ and 
\begin{align}
    {\uw_n}(x',0)
    ={}&
    \frac{1}{4}
    \sum_{\ell=1}^{n-1}
    \mathcal{R}_{\ell}
    \mathcal{L}^{(3,+)}\left[ {F_{n,\ell}} \right](x',0)
    -
    \frac{1}{4}
    \mathcal{L}^{(4,-)}\left[ {F_{n,n}} \right](x',0),
\end{align}
where we have used 
\begin{align}
    &\mathcal{L}^{(2,-)}\left[ f \right](x',0)
    =
    -
    \mathcal{L}^{(1,+)}\left[ f \right](x',0),\\
    &
    \mathcal{L}^{(4,+)}\left[ f \right](x',0)
    =
    \mathcal{L}^{(5,-)}\left[ f \right](x',0)
    =0.
\end{align}
As there holds
\begin{align}
    &\mathcal{L}^{(1,+)}\left[ f \right](x',0)
    =
    2
    \int_0^{\infty}
    e^{-|\nabla'|y_n}f(x',y_n)dy_n,\\
    &\mathcal{L}^{(3,+)}\left[ f \right](x',0)
    =
    2
    \int_0^{\infty}
    (1+y_n|\nabla'|)
    e^{-|\nabla'|y_n}f(x',y_n)dy_n,\\
    &\mathcal{L}^{(4,-)}\left[ f \right](x',0)
    =
    -
    2
    \int_0^{\infty}
    y_n|\nabla'|
    e^{-|\nabla'|y_n}f(x',y_n)dy_n.
\end{align}
Thus, we see that 
\begin{align}
    e^{-c2^jx_n}
    \n{\Delta_j \uw(\cdot,0)}_{L^p_{x'}}
    \leq{}&
    C
    e^{-c2^jx_n}
    \n{\Delta_j \mathcal{L}^{(1,+)}\left[ F \right](x',0)}_{L^p_{x'}}\\
    &
    +
    C
    e^{-c2^jx_n}
    \n{\Delta_j \mathcal{L}^{(3,+)}\left[ F \right](x',0)}_{L^p_{x'}}\\
    &
    +
    C
    e^{-c2^jx_n}
    \n{\Delta_j \mathcal{L}^{(4,-)}\left[ F \right](x',0)}_{L^p_{x'}}\\
    \leq{}&
    C
    e^{-c2^jx_n}
    \int_0^{\infty}e^{-c2^jy_n}(1+2^jy_n)\n{\Delta_j F(\cdot,y_n)}_{L^p_{x'}}dy_n\\
    \leq{}&
    C
    \int_0^{\infty}e^{-c2^j|x_n-y_n|}\n{\Delta_j F(\cdot,y_n)}_{L^p_{x'}}dy_n.
\end{align}
For the estimates of $\mathcal{L}^{(k,\pm)}[F]$ ($k=1,2,3,4,5$), we have
\begin{align}
    \n{\mathcal{L}^{(k,\pm)}[F](\cdot,x_n)}_{L^p_{x'}}
    \leq{}&
    C
    \int_0^{\infty}
    \left\{
    (1+2^j|x_n-y_n|)e^{-c2^j|x_n-y_n|}\right.\\
    &\qquad \left.
    +
    \left(1+2^j(x_n+y_n)\right)e^{-c2^j(x_n+y_n)}
    \right\}
    \n{\Delta_j F(\cdot,y_n)}_{L^p_{x'}}
    dy_n\\
    \leq{}&
    C
    \int_0^{\infty}e^{-c2^j|x_n-y_n|}\n{\Delta_j F(\cdot,y_n)}_{L^p_{x'}}dy_n.
\end{align}
Thus, we obtain 
\begin{align}\label{est-lin-1}
    \begin{split}
    \n{\Delta_j u(\cdot,x_n)}_{L^p_{x'}}
    \leq{}&
    Ce^{-c2^jx_n}\n{\Delta_j a}_{L^p_{x'}}\\
    &+
    C
    \int_0^{\infty}e^{-c2^j|x_n-y_n|}\n{\Delta_j F(\cdot,y_n)}_{L^p_{x'}}dy_n.
    \end{split}
\end{align}
Taking $L^{q_1}(0,\infty)$ norm with respect to $x_n$ and using the Hausdorff--Young inequality, we obtain 
\begin{align}
    \n{\Delta_j u}_{L^{q_1}(0,\infty;L^p)}
    \leq{}&
    C\n{e^{-c2^jx_n}}_{L^{q_1}_{x_n}(0,\infty)}\n{\Delta_j a}_{L^p_{x'}}
    +
    C
    \n{e^{-c2^j|\cdot|}}_{L^{q_2}(\mathbb{R})}
    \n{\Delta_j F}_{L^{q}(0,\infty;L^p)}\\
    ={}&
    C2^{-\frac{1}{q_1}j}\n{\Delta_j a}_{L^p_{x'}}
    +
    C
    2^{(-\frac{1}{q_1}+\frac{1}{q}-1)j}
    \n{\Delta_j F}_{L^{q}(0,\infty;L^p)},
\end{align}
where
$1/q_2:=1/q_1-1/q+1$.
Multiplying this by $2^{(\frac{n-1}{p}+\frac{1}{q_1}-1)j}$ and taking $\ell^r(\mathbb{Z})$ norm, 
we complete the proof.
\end{proof}

\section{Proofs of main theorems}\label{sec:pf}
In this section, we prove our main theorems.
Before starting the proof, we introduce a lemma for bilinear estimate.
\begin{lemm}\label{lemm:bilin}
    Let $n\geq3$ be an integer.
    Let $1 \leq q,r\leq \infty$ and let
    \begin{align}
        1 \leq p <q_*'(n-1), 
        \qquad 
        q_*:=\max\{2,q\}, 
    \end{align}
    where $q_*':=q_*/(q_*-1)$ denotes the H\"older conjugate of $q_*$.
    We additionally assume that $q < \infty$ if $n=3$.
    Then, there exists a positive constant $C=C(p,q,r)$ such that 
    \begin{align}
        \n{fg}_{\widetilde{L^{\frac{q_*}{2}}_{x_n}}(\dB_{p,r}^{\frac{n-1}{p}+\frac{2}{q_*}-2})_{x'}}
        \leq
        C
        \n{f}_{\widetilde{L^{q_*}_{x_n}}(\dB_{p,r}^{\frac{n-1}{p}+\frac{1}{q_*}-1})_{x'}}
        \n{g}_{\widetilde{L^{q_*}_{x_n}}(\dB_{p,r}^{\frac{n-1}{p}+\frac{1}{q_*}-1})_{x'}}
    \end{align}
    for all $f,g \in \widetilde{L^{q_*}_{x_n}}(\dB_{p,r}^{\frac{n-1}{p}+\frac{1}{q_*}-1})_{x'}$.
\end{lemm}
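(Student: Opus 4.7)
I will prove the estimate by adapting Bony's paraproduct decomposition in the tangential variable $x'$, using H\"older's inequality $L^{q_*/2}_{x_n}\subset L^{q_*}_{x_n}\cdot L^{q_*}_{x_n}$ to absorb the extra $x_n$-integrability. Set $s_1:=(n-1)/p+1/q_*-1$ and $s_T:=(n-1)/p+2/q_*-2$, so that $s_T=2s_1-(n-1)/p$ is the regularity produced by the classical product rule $\dB_{p,r}^{s_1}\cdot\dB_{p,r}^{s_1}\hookrightarrow\dB_{p,r}^{2s_1-(n-1)/p}(\mathbb{R}^{n-1})$. The hypothesis $p<q_*'(n-1)$ is equivalent to $s_1>0$; moreover $(n-1)/p-s_1=1/q_*'>0$ since $q_*\ge2$. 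Decompose $fg=T_fg+T_gf+R(f,g)$ with $T_fg=\sum_k S_{k-1}f\,\Delta_k g$ and $R(f,g)=\sum_{|k-k'|\le 1}\Delta_k f\,\Delta_{k'}g$.

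For the paraproduct $T_fg$, almost-orthogonality restricts $\Delta_j T_fg$ to the range $|j-k|\le C_0$, and H\"older $L^\infty_{x'}\cdot L^p_{x'}\subset L^p_{x'}$ reduces matters to controlling $\|S_{k-1}f\|_{L^{q_*}_{x_n}L^\infty_{x'}}$. Bernstein on each $\Delta_m$-block combined with Young's convolution inequality (applicable because $(n-1)/p-s_1=1/q_*'>0$) yields $\|S_{k-1}f\|_{L^{q_*}_{x_n}L^\infty_{x'}}\le C\,2^{k/q_*'}\widetilde c_k$ with $(\widetilde c_k)\in\ell^r(\mathbb{Z})$ of norm controlled by $\|f\|_{\widetilde{L^{q_*}_{x_n}}(\dB_{p,r}^{s_1})_{x'}}$. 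Writing $\|\Delta_k g\|_{L^{q_*}_{x_n}L^p_{x'}}=2^{-k s_1}d_k$ with $(d_k)\in\ell^r$, the identity $s_1-1/q_*'=s_T$ gives $2^{js_T}\|\Delta_j T_fg\|_{L^{q_*/2}_{x_n}L^p_{x'}}\le C\,\widetilde c_j d_j$; the embedding $\ell^r\hookrightarrow\ell^\infty$ then converts the pointwise product into an $\ell^r$ bound by $C\|f\|\,\|g\|$. The term $T_gf$ is handled symmetrically.

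The remainder is the delicate piece, and the assumption ``$q<\infty$ if $n=3$'' enters precisely here. The naive Bernstein bound on $\Delta_k f\,\Delta_{k'}g$ produces the summability exponent $s_T$, which can be non-positive under the hypotheses. The remedy is to apply Bernstein to the \emph{outer} block $\Delta_j$: for $p\ge 2$, Bernstein $L^{p/2}_{x'}\to L^p_{x'}$ on $\Delta_j$ gives $\|\Delta_j(\Delta_k f\,\Delta_{k'}g)\|_{L^p_{x'}}\le C 2^{(n-1)j/p}\|\Delta_k f\|_{L^p_{x'}}\|\Delta_{k'}g\|_{L^p_{x'}}$; for $p<2$, combining Bernstein $L^1_{x'}\to L^p_{x'}$ on $\Delta_j$ with H\"older $L^{p_1}_{x'}\cdot L^{p_2}_{x'}\subset L^1_{x'}$ ($1/p_1+1/p_2=1$, $p_1,p_2\ge p$) and Bernstein on the inner blocks yields the analogue with an extra factor $2^{(n-1)k(2/p-1)}$. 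After H\"older in $x_n$ and summation over $k\ge j-C_0$, both cases give
\[
2^{j s_T}\|\Delta_j R(f,g)\|_{L^{q_*/2}_{x_n}L^p_{x'}}
\le
C\sum_{k\ge j-C_0}2^{(j-k)\sigma}c_k d_k,
\qquad
\sigma:=2s_1-(n-1)\max\{0,\,2/p-1\}.
\]
A direct case check shows $\sigma>0$ under the hypotheses: for $p\ge 2$ this reads $2s_1>0\iff p<q_*'(n-1)$, and for $p<2$ it reads $n-3+2/q_*>0$, which is automatic for $n\ge 4$ and reduces exactly to $q<\infty$ when $n=3$. Young's inequality $\ell^1*\ell^r\subset\ell^r$ then closes the estimate. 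The principal obstacle is precisely this: the naive Bernstein bound yields a non-summable convolution kernel whenever $s_T\le 0$, and only the outer-block Bernstein above produces the strictly positive exponent $\sigma$ that closes the estimate and pinpoints the role of the ``$q<\infty$ if $n=3$'' condition.
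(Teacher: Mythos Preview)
Your proof is correct and follows essentially the same approach as the paper: Bony's paraproduct decomposition in $x'$, H\"older in $x_n$, the $\ell^\infty$--bound plus geometric summation for $T_fg$, and for the remainder an outer-block Bernstein ($L^{p/2}\!\to L^p$ when $p\ge2$, $L^1\!\to L^p$ with inner Bernstein when $p<2$) producing the same exponent $\sigma$ and the same role for the hypothesis $q<\infty$ when $n=3$. The only organizational difference is that the paper splits the case $p\ge2$ further according to whether $q_*=q$ or $q_*=2$, whereas you treat it uniformly; the content is the same.
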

\begin{proof}
Although the proof is based on the standard para product method (see \cite{Bah-Che-Dan-11} for the detail), 
we provide the precise proof as there are some steps that need a little bit complicated arguments.
Let us decompose the product $fg$ as 
\begin{align}
    fg = T_fg + R(f,g) + T_gf,
\end{align}
where
\begin{align}
    T_fg := \sum_{k \in \mathbb{Z}} \left(\sum_{\ell \leq k -3}\Delta_{\ell}f\right)\Delta_kg,\quad
    R(f,g) := \sum_{|k-\ell| \leq 2} \Delta_k f \Delta_{\ell}g.
\end{align}
We note that 
\begin{align}
    &\Delta_j T_fg = \sum_{|j - k | \leq 2 }\Delta_j\left\{ \left(\sum_{\ell \leq k -3}\Delta_{\ell}f\right) \Delta_kg \right\},\\
    &\Delta_j R(f,g) = \sum_{k \geq j-4}\sum_{|k-\ell| \leq 2} \Delta_j \left( \Delta_k f \Delta_{\ell} g \right)
\end{align}
holds for all $j \in \mathbb{Z}$.

By the H\"older inequality, we have
\begin{align}
    &\n{\Delta_jT_fg}_{{L^{\frac{q_*}{2}}_{x_n}}(0,\infty;L^p_{x'})}\\
    &\quad\leq{}
    C
    \sum_{|j - k | \leq 2 }
    \left(\sum_{\ell \leq k -3}\n{\Delta_{\ell}f}_{L^{q_*}_{x_n}(0,\infty;L^{\infty}_{x'})}\right) \n{\Delta_kg}_{L^{q_*}_{x_n}(0,\infty;L^p_{x'})}\\
    &\quad\leq{}
    C
    \sum_{|j - k | \leq 2 }
    \left(\sum_{\ell \leq k -3}2^{\frac{n-1}{p}\ell}\n{\Delta_{\ell}f}_{L^{q_*}_{x_n}(0,\infty;L^p_{x'})}\right) \n{\Delta_kg}_{L^{q_*}_{x_n}(0,\infty;L^p_{x'})}\\
    &\quad\leq{}
    C
    \n{f}_{\widetilde{L^{q_*}_{x_n}}(\dB_{p,r}^{\frac{n-1}{p}+\frac{1}{q_*}-1})_{x'}}
    \sum_{|j - k | \leq 2 }
    2^{(1-\frac{1}{q_*})k}\n{\Delta_kg}_{L^{q_*}_{x_n}(0,\infty;L^p_{x'})}.
\end{align}
Multiplying this by $2^{(\frac{n-1}{p}+\frac{2}{q_*}-2)j}$ and taking $\ell^r$-norm,
we have
\begin{align}
        \n{T_fg}_{\widetilde{L^{\frac{q_*}{2}}_{x_n}}(\dB_{p,r}^{\frac{n-1}{p}+\frac{2}{q_*}-2})_{x'}}
        \leq
        C
        \n{f}_{\widetilde{L^{q_*}_{x_n}}(\dB_{p,r}^{\frac{n-1}{p}+\frac{1}{q_*}-1})_{x'}}
        \n{g}_{\widetilde{L^{q_*}_{x_n}}(\dB_{p,r}^{\frac{n-1}{p}+\frac{1}{q_*}-1})_{x'}}
\end{align}
Similarly, we have
\begin{align}
        \n{T_gf}_{\widetilde{L^{\frac{q_*}{2}}_{x_n}}(\dB_{p,r}^{\frac{n-1}{p}+\frac{2}{q_*}-2})_{x'}}
        \leq
        C
        \n{f}_{\widetilde{L^{q_*}_{x_n}}(\dB_{p,r}^{\frac{n-1}{p}+\frac{1}{q_*}-1})_{x'}}
        \n{g}_{\widetilde{L^{q_*}_{x_n}}(\dB_{p,r}^{\frac{n-1}{p}+\frac{1}{q_*}-1})_{x'}}.
\end{align}

Next, we consider the estimate for $R(f,g)$ by the steps divided into three parts.
We first consider the case 
\footnote{If $q=\infty$ with $n=3$, then we see that $[2, q_*'(n-1))=[2,2)=\emptyset$.}
\begin{align}\label{case1}
    \begin{split}
    &2\leq p < q_*'(n-1),\quad 2\leq q \leq \infty \qquad {\rm with \ }n \geq 4,\\
    &2\leq p < q_*'(n-1),\quad 2\leq q <    \infty \qquad {\rm with \ }n \geq 3.
    \end{split}
\end{align}
We note that $q_*=q$ in this case.
We see that
\begin{align}
    &
    2^{(\frac{2(n-1)}{p}+\frac{2}{q}-2)j}
    \n{\Delta_jR(f,g)}_{L^{\frac{q}{2}}_{x_n}(0,\infty;L^{\frac{p}{2}})}\\
    &
    \quad
    \leq
    C
    \sum_{k \geq j-4}
    2^{(\frac{2(n-1)}{p}+\frac{2}{q}-2)(j-k)}\\
    &\qquad\qquad\qquad\times
    2^{(\frac{2(n-1)}{p}+\frac{2}{q}-2)k}
    \n{\Delta_kf}_{L^q_{x_n}(0,\infty;L^p_{x'})}
    \sum_{|\ell - k| \leq 2}
    \n{\Delta_{\ell}g}_{L^q_{x_n}(0,\infty;L^p_{x'})}\\
    &\quad 
    =C(a*b)_j,
\end{align}
where $*$ stands for the convolution on $\mathbb{Z}$, and two sequences $a=\{a_j\}_{j\in \mathbb{Z}}$ and $b=\{b_j\}_{j\in \mathbb{Z}}$ are defined by
\begin{align}
    a_j:={}&
    \begin{cases}
        2^{(\frac{2(n-1)}{p}+\frac{2}{q}-2)j} & (j \leq 4),\\
        0 & (j \geq 5),
    \end{cases}\\
    b_j:={}&
    2^{(\frac{2(n-1)}{p}+\frac{2}{q}-2)j}
    \n{\Delta_jf}_{L^q_{x_n}(0,\infty;L^p_{x'})}
    \sum_{|\ell - j| \leq 2}
    \n{\Delta_{\ell}g}_{L^q_{x_n}(0,\infty;L^p_{x'})}.
\end{align}
Using the fact that \eqref{case1} ensures $2(n-1)/p+2/q-2>0$ and the Hausdorff--Young inequality, we have
\begin{align}
    &
    \n{R(f,g)}_{\widetilde{L^{\frac{q}{2}}_{x_n}}(\dB_{p,r}^{\frac{n-1}{p}+\frac{2}{q_*}-2})_{x'}}\\
    &\quad
    \leq
    C
    \n{R(f,g)}_{\widetilde{L^{\frac{q}{2}}_{x_n}}(\dB_{\frac{p}{2},r}^{\frac{2(n-1)}{p}+\frac{2}{q_*}-2})_{x'}}\\
    &\quad 
    \leq
    C
    \n{\left\{2^{(\frac{2(n-1)}{p}+\frac{2}{q}-2)k}
    \n{\Delta_kf}_{L^q_{x_n}(0,\infty;L^p_{x'})}
    \sum_{|\ell - k| \leq 2}
    \n{\Delta_{\ell}g}_{L^q_{x_n}(0,\infty;L^p_{x'})}\right\}_{k\in \mathbb{Z}}}_{\ell^r(\mathbb{Z})}\\
    &\quad
    \leq
    C
    \n{f}_{\widetilde{L^{q}_{x_n}}(\dB_{p,r}^{\frac{n-1}{p}+\frac{1}{q}-1})_{x'}}
    \n{g}_{\widetilde{L^{q}_{x_n}}(\dB_{p,r}^{\frac{n-1}{p}+\frac{1}{q}-1})_{x'}}.
\end{align}
Next, we consider the case of 
\begin{align}\label{case2}
    2 \leq p < q_*'(n-1),\qquad 1 \leq q \leq 2.
\end{align}
We note that $q_*=q_*'=2$ holds in this case.
We see that
\begin{align}
    &
    2^{(\frac{2(n-1)}{p}-1)j}
    \n{\Delta_j R(f,g)}_{L^{1}_{x_n}(0,\infty;L^{\frac{p}{2}})}\\
    &
    \quad
    \leq
    C
    \sum_{k \geq j-4}
    2^{(\frac{2(n-1)}{p}-1)(j-k)}
    2^{(\frac{2(n-1)}{p}-1)k}
    \n{\Delta_kf}_{L^2_{x_n}(0,\infty;L^p_{x'})}
    \sum_{|\ell - k| \leq 2}
    \n{\Delta_{\ell}g}_{L^2_{x_n}(0,\infty;L^p_{x'})}.
\end{align}
Similarly as above, using the fact that \eqref{case2} ensures $2(n-1)/p-1>0$ and the Hausdorff--Young inequality, we have
\begin{align}
    &
    \n{R(f,g)}_{\widetilde{L^{1}_{x_n}}(\dB_{p,r}^{\frac{n-1}{p}-1})_{x'}}\\
    &\quad
    \leq
    C
    \n{R(f,g)}_{\widetilde{L^{1}_{x_n}}(\dB_{\frac{p}{2},r}^{\frac{2(n-1)}{p}-1})_{x'}}\\
    &\quad 
    \leq
    C
    \n{\left\{2^{(\frac{2(n-1)}{p}-1)k}
    \n{\Delta_kf}_{L^2_{x_n}(0,\infty;L^{p}_{x'})}
    \sum_{|\ell - k| \leq 2}
    \n{\Delta_{\ell}g}_{L^2_{x_n}(0,\infty;L^p_{x'})}\right\}_{k\in \mathbb{Z}}}_{\ell^r(\mathbb{Z})}\\
    &\quad
    \leq
    C
    \n{f}_{\widetilde{L^{2}_{x_n}}(\dB_{p,r}^{\frac{n-1}{p}-\frac{1}{2}})_{x'}}
    \n{g}_{\widetilde{L^{2}_{x_n}}(\dB_{p,r}^{\frac{n-1}{p}-\frac{1}{2}})_{x'}}.
\end{align}
Finally, we consider the case 
\begin{align}\label{case3}
    \begin{split}
    (p,q) &\in [1,2)\times[1,\infty] \qquad {\rm with\ }n \geq 4,\\
    (p,q) &\in [1,2)\times[1,\infty) \qquad {\rm with\ }n = 3.
    \end{split}
\end{align}
We have by the H\"older and Bernstein inequalities that 
\begin{align}
    &
    2^{(n-3+\frac{2}{q_*})j}
    \n{\Delta_j R(f,g)}_{L^{\frac{q_*}{2}}_{x_n}(0,\infty;L^1)}\\
    &
    \quad
    \leq
    C
    \sum_{k \geq j-4}
    2^{(n-3+\frac{2}{q_*})j}
    \n{\Delta_kf}_{L^{q_*}_{x_n}(0,\infty;L^p_{x'})}
    \sum_{|\ell - k| \leq 2}
    \n{\Delta_\ell g}_{L^{q_*}_{x_n}(0,\infty;L^{p'}_{x'})}\\
    &
    \quad
    \leq
    C
    \sum_{k \geq j-4}
    2^{(n-3+\frac{2}{q_*})j}
    \n{\Delta_kf}_{L^{q_*}_{x_n}(0,\infty;L^p_{x'})}
    \sum_{|\ell - k| \leq 2}
    2^{(n-1)(\frac{2}{p}-1)\ell}
    \n{\Delta_\ell g}_{L^{q_*}_{x_n}(0,\infty;L^{p}_{x'})}\\
    &
    \begin{aligned}
    \quad
    =
    C
    \sum_{k \geq j-4}
    2^{(n-3+\frac{2}{q_*})(j-k)}
    &
    2^{(\frac{n-1}{p}+\frac{1}{q_*}-1)k}
    \n{\Delta_kf}_{L^{q_*}_{x_n}(0,\infty;L^p_{x'})}\\
    &
    \times
    \sum_{|\ell - k| \leq 2}
    2^{(\frac{n-1}{p}+\frac{1}{q_*}-1)\ell}
    \n{\Delta_\ell g}_{L^{q_*}_{x_n}(0,\infty;L^{p}_{x'})}.
    \end{aligned}
\end{align}
Here $p'=p/(p-1)$ denotes the H\"older conjugate of $p$.
Using the fact that \eqref{case3} ensures $n-3+2/q_*>0$
\footnote{We remark that this fails if $n=3$ and $q=\infty$. Thus, we suppose $q < \infty$ for the case $n=3$ in \eqref{case3}.}
and the Hausdorff--Young inequality, we have
\begin{align}
    &
    \n{R(f,g)}_{\widetilde{L^{\frac{q_*}{2}}_{x_n}}(\dB_{p,r}^{\frac{n-1}{p}+\frac{2}{q_*}-2})_{x'}}\\
    &\quad
    \leq
    C
    \n{R(f,g)}_{\widetilde{L^{\frac{q_*}{2}}_{x_n}}(\dB_{1,r}^{n-3+\frac{2}{q_*}})_{x'}}\\
    &\quad 
    \leq
    C
    \left\|\left\{
    2^{(\frac{n-1}{p}+\frac{1}{q_*}-1)k}
    \n{\Delta_kf}_{L^{q_*}_{x_n}(0,\infty;L^p_{x'})}\right.\right.\\
    &\qquad \qquad \qquad
    \left.\left.
    \times
    \sum_{|\ell - k| \leq 2}
    2^{(\frac{n-1}{p}+\frac{1}{q_*}-1)\ell}
    \n{\Delta_\ell g}_{L^{q_*}_{x_n}(0,\infty;L^{p}_{x'})}
    \right\}_{k\in \mathbb{Z}}\right\|_{\ell^r(\mathbb{Z})}\\
    &\quad
    \leq
    C
    \n{f}_{\widetilde{L^{q_*}_{x_n}}(\dB_{p,r}^{\frac{n-1}{p}+\frac{1}{q_*}-1})_{x'}}
    \n{g}_{\widetilde{L^{q_*}_{x_n}}(\dB_{p,r}^{\frac{n-1}{p}+\frac{1}{q_*}-1})_{x'}}.
\end{align}
Hence, collecting above estimates, we complete the proof.
\end{proof}

Now, we are in a position to present the proofs of main results.
\begin{proof}[Proof of Theorem \ref{thm:1}]
Let $a \in \dB_{p,r}^{\frac{n-1}{p}-1}(\mathbb{R}^{n-1})$ and $F \in \widetilde{L^q_{x_n}}(\dB_{p,r}^{\frac{n-1}{p}+\frac{1}{q}-2})_{x'}(\mathbb{R}^n_+)$ satisfy \eqref{small}, where $\delta_0$ is a positive constant to be determined later. 
Let
\begin{align}
    X_{p,q,r}
    &:=
    \left\{
    \begin{aligned}
        u 
        \in \widetilde{L^{q_*}_{x_n}}(\dB_{p,r}^{\frac{n-1}{p}+\frac{1}{q_*}-1})_{x'}(\mathbb{R}^n_+)
    \end{aligned}
    \ ;\
    \| u \|_{\widetilde{L^{q_*}_{x_n}}(\dB_{p,r}^{\frac{n-1}{p}+\frac{1}{q_*}-1})_{x'}}
    \leq \varepsilon_0 
    \right\},
\end{align}
where $\varepsilon_0$ is a positive constant to be determined later.
The aim of this proof is to construct a unique function $u \in X_{p,q,r}$ satisfying 
\begin{align}
    u=\mathcal{U}^{\rm boundary}[a] + \mathcal{U}^{\rm force}[F - u \otimes u].
\end{align}
Let us define a map $\mathcal{S}[v]:=\mathcal{U}^{\rm boundary}[a] + \mathcal{U}^{\rm force}[F - v \otimes v]$ for $v \in X_{p,q,r}$.
Here, it follows from Lemmas \ref{lemm:max-reg} and \ref{lemm:bilin} that
there exists a constant $C_0=C_0(n,p,q,r) \geq 1$ such that 
\begin{align}
    \n{\mathcal{U}^{\rm boundary}[a]}_{\widetilde{L^{q_*}_{x_n}}(\dB_{p,r}^{\frac{n-1}{p}+\frac{1}{q_*}-1})_{x'}}
    &\leq
    C_0
    \n{a}_{\dB_{p,r}^{\frac{n-1}{p}-1}},\\
    \n{\mathcal{U}^{\rm force}[F]}_{\widetilde{L^{q_*}_{x_n}}(\dB_{p,r}^{\frac{n-1}{p}+\frac{1}{q_*}-1})_{x'}}
    &\leq
    C_0
    \n{F}_{\widetilde{L^{q}_{x_n}}(\dB_{p,r}^{\frac{n-1}{p}+\frac{1}{q}-2})_{x'}},
\end{align}
and
\begin{align}
    \n{\mathcal{U}^{\rm force}[v \otimes w]}_{\widetilde{L^{q_*}_{x_n}}(\dB_{p,r}^{\frac{n-1}{p}+\frac{1}{q_*}-1})_{x'}}
    &\leq
    C
    \n{v\otimes w}_{\widetilde{L^{\frac{q_*}{2}}_{x_n}}(\dB_{p,r}^{\frac{n-1}{p}+\frac{2}{q_*}-2})_{x'}}\\
    &\leq
    C_0
    \n{v}_{\widetilde{L^{q_*}_{x_n}}(\dB_{p,r}^{\frac{n-1}{p}+\frac{1}{q_*}-1})_{x'}}
    \n{w}_{\widetilde{L^{q_*}_{x_n}}(\dB_{p,r}^{\frac{n-1}{p}+\frac{1}{q_*}-1})_{x'}}
\end{align}
for all $v,w \in \widetilde{L^{q_*}_{x_n}}(\dB_{p,r}^{\frac{n-1}{p}+\frac{1}{q_*}-1})_{x'}(\mathbb{R}^{n}_+)$.
Then, we see that 
\begin{align}
    \n{\mathcal{S}[v]}_{\widetilde{L^{q_*}_{x_n}}(\dB_{p,r}^{\frac{n-1}{p}+\frac{1}{q_*}-1})_{x'}}
    &
    \leq
    C_0
    \n{a}_{\dB_{p,r}^{\frac{n-1}{p}-1}}
    +
    C_0
    \n{F}_{\widetilde{L^{q}_{x_n}}(\dB_{p,r}^{\frac{n-1}{p}+\frac{1}{q}-2})_{x'}}\\
    &\quad
    +
    C_0
    \left(
    \n{v}_{\widetilde{L^{q_*}_{x_n}}(\dB_{p,r}^{\frac{n-1}{p}+\frac{1}{q_*}-1})_{x'}}
    \right)^2\\
    &\leq
    C_0\delta_0
    +
    C_0\varepsilon_0^2
\end{align}
for all $v \in X_{p,q,r}$.
Similarly, we see by 
\begin{align}
    \mathcal{S}[v] - \mathcal{S}[w] 
    =
    -
    \mathcal{U}^{\rm force}[v\otimes(v-w) ]
    -
    \mathcal{U}^{\rm force}[(v-w)\otimes w]
\end{align}
that 
\begin{align}
    &\n{\mathcal{S}[v] - \mathcal{S}[w]}_{\widetilde{L^{q_*}_{x_n}}(\dB_{p,r}^{\frac{n-1}{p}+\frac{1}{q_*}-1})_{x'}}\\
    &\quad 
    \leq
    C_0
    \left(
    \n{v}_{\widetilde{L^{q_*}_{x_n}}(\dB_{p,r}^{\frac{n-1}{p}+\frac{1}{q_*}-1})_{x'}}
    +
    \n{w}_{\widetilde{L^{q_*}_{x_n}}(\dB_{p,r}^{\frac{n-1}{p}+\frac{1}{q_*}-1})_{x'}}
    \right)
    \n{v-w}_{\widetilde{L^{q_*}_{x_n}}(\dB_{p,r}^{\frac{n-1}{p}+\frac{1}{q_*}-1})_{x'}}\\
    &\quad
    \leq
    2C_0\varepsilon_0
    \n{v-w}_{\widetilde{L^{q_*}_{x_n}}(\dB_{p,r}^{\frac{n-1}{p}+\frac{1}{q_*}-1})_{x'}}
\end{align}
for all $v,w \in X_{p,q,r}$.
Here, let $\delta_0:= 1/(12C_0^2)$ and $\varepsilon_0=1/(4C_0)$.
Then, there holds
\begin{align}
    \n{\mathcal{S}[v]}_{\widetilde{L^{q_*}_{x_n}}(\dB_{p,r}^{\frac{n-1}{p}+\frac{1}{q_*}-1})_{x'}}
    &\leq
    \varepsilon_0,
    \\
    \n{\mathcal{S}[v] - \mathcal{S}[w]}_{\widetilde{L^{q_*}_{x_n}}(\dB_{p,r}^{\frac{n-1}{p}+\frac{1}{q_*}-1})_{x'}}
    &\leq
    \frac{1}{2}
    \n{v - w}_{\widetilde{L^{q_*}_{x_n}}(\dB_{p,r}^{\frac{n-1}{p}+\frac{1}{q_*}-1})_{x'}}
\end{align}
for all $v,w \in X_{p,q,r}$.
Hence, the contraction mapping principle implies that there exists a unique $u \in X_{p,q,r}$ such that $u = \mathcal{S}[u]$.
It follows from Lemma \ref{lemm:max-reg} that 
\begin{align}
    \n{u}_{\widetilde{L^{\infty}_{x_n}}(\dB_{p,r}^{\frac{n-1}{p}-1})_{x'}}
    &=
    \n{\mathcal{S}[u]}_{\widetilde{L^{\infty}_{x_n}}(\dB_{p,r}^{\frac{n-1}{p}-1})_{x'}}\\
    &\leq
    C
    \n{a}_{\dB_{p,r}^{\frac{n-1}{p}-1}}
    +
    C
    \n{F}_{\widetilde{L^{q}_{x_n}}(\dB_{p,r}^{\frac{n-1}{p}+\frac{1}{q}-2})_{x'}}\\
    &\quad
    +
    C
    \n{u\otimes u}_{\widetilde{L^{\frac{q_*}{2}}_{x_n}}(\dB_{p,r}^{\frac{n-1}{p}+\frac{2}{q_*}-2})_{x'}}\\
    &\leq
    C
    \n{a}_{\dB_{p,r}^{\frac{n-1}{p}-1}}
    +
    C
    \n{F}_{\widetilde{L^{q}_{x_n}}(\dB_{p,r}^{\frac{n-1}{p}+\frac{1}{q}-2})_{x'}}\\
    &\quad
    +
    C
    \left(
    \n{u}_{\widetilde{L^{q_*}_{x_n}}(\dB_{p,r}^{\frac{n-1}{p}+\frac{1}{q_*}-1})_{x'}}
    \right)^2\\
    &< \infty,
\end{align}
which yields $u \in \widetilde{L^{\infty}_{x_n}}(\dB_{p,r}^{\frac{n-1}{p}-1})_{x'}(\mathbb{R}^n_+)$.
Hence, we complete the proof.
\end{proof}

\begin{proof}[Proof of Theorem \ref{thm:2}]
Let $0< \delta_1 \leq \delta_0$ and $0<\varepsilon_1\leq \varepsilon_0$ be a constant so small that for any $\bar{F} \in \dB_{p,r}^{\frac{n-1}{p}-2}(\mathbb{R}^{n-1})$ with $\| \bar{F} \|_{\dB_{p,r}^{\frac{n-1}{p}-2}} \leq \delta_1$, \eqref{eq:NS_n-1} possesses a unique solution $\bar{u} \in \dB_{p,r}^{\frac{n-1}{p}-1}(\mathbb{R}^{n-1})$ satisfying $\| \bar{u} \|_{\dB_{p,r}^{\frac{n-1}{p}-1}} \leq \varepsilon_1$.
Let $u$ be the solution to \eqref{eq:NS} constructed in Theorem \ref{thm:1}.
Since $(\bar{u},\bar{p})$ satisfy
\begin{align}
    \begin{cases}
        -\Delta \bar{u} + (\bar{u} \cdot \nabla) \bar{u} + \nabla \bar{p} = \div \bar{F}, \qquad &x \in\mathbb{R}^n_+,\\
        \div \bar{u} = 0 & x \in \mathbb{R}^n_+,
    \end{cases}
\end{align}
the perturbations
$U := u - \bar{u}$ and $P := p - \bar{p}$ solve
\begin{align}
    \begin{cases}
    -\Delta U + (u \cdot \nabla)U + (U \cdot \nabla)\bar{u} + \nabla P = 0, \qquad & x \in \mathbb{R}^n_+,\\
    \div U = 0, & x \in \mathbb{R}^n_+,\\
    U(x',0)=a(x') - \bar{u}(x')=:b(x'), & x' \in \mathbb{R}^{n-1}
    \end{cases}
\end{align}
and thus the corresponding integral equation is given by 
\begin{align}
    U = \mathcal{U}^{\rm boundary}[b] - \mathcal{U}^{\rm force}[ u \otimes U +  U \otimes \bar{u}]. 
\end{align}
Let $\widetilde{R}>R>0$ and $x_n>\widetilde{R}$.
Then, we see from \eqref{est-lin-1} that 
\begin{align}
    \n{\Delta_jU(\cdot,x_n)}_{L^p}
    \leq{}&
    C
    e^{-c2^jx_n}\n{\Delta_jb}_{L^p}\\
    &
    +
    C
    \int_{0}^{\infty}
    e^{-c2^j|x_n-y_n|}
    \n{\Delta_j(u\otimes U + U \otimes \bar{u})(\cdot,y_n)}_{L^p}
    dy_n\\
    \leq{}&
    C
    e^{-c2^j\widetilde{R}}
    \n{\Delta_jb}_{L^p}\\
    &
    +
    C
    \int_{0}^{R}
    e^{-c2^j(x_n-y_n)}
    dy_n
    \n{\Delta_j(u\otimes U + U \otimes \bar{u})}_{L^{\infty}_{x_n}(0,\infty;L^p_{x'})}
    \\
    &
    +
    C
    \int_{R}^{\infty}
    e^{-c2^j|x_n-y_n|}
    dy_n
    \n{\Delta_j(u\otimes U + U \otimes \bar{u})}_{L^{\infty}_{x_n}(R,\infty;L^p_{x'})}
    \\
    \leq{}&
    C
    e^{-c2^j\widetilde{R}}
    \n{\Delta_jb}_{L^p}\\
    &
    +
    C
    e^{-c2^j(\widetilde{R}-R)}
    2^{-j}
    \n{\Delta_j(u\otimes U + U \otimes \bar{u})}_{L^{\infty}_{x_n}(0,\infty;L^p_{x'})}
    \\
    &
    +
    C2^{-j}
    \n{\Delta_j(u\otimes U + U \otimes \bar{u})}_{L^{\infty}_{x_n}(R,\infty;L^p_{x'})}.
\end{align}
Taking $L^{\infty}_{x_n}(\widetilde{R},\infty)$-norm and then $\ell^r(\mathbb{Z})$-norm for $j$ with the weight $2^{(\frac{n-1}{p}-1)j}$, we have 
\begin{align}
    &\n{U}_{\widetilde{L^{\infty}_{x_n}}(\widetilde{R};\infty;\dB_{p,r}^{\frac{n-1}{p}-1})}
    \leq{}
    C
    \left\{
    \sum_{j\in \mathbb{Z}}
    \left(
    e^{-c2^j\widetilde{R}}
    2^{(\frac{n-1}{p}-1)j}
    \n{\Delta_jb}_{L^p}
    \right)^r
    \right\}^{\frac{1}{r}}\\
    &\quad
    +
    C
    \left\{
    \sum_{j\in \mathbb{Z}}
    \left(
    e^{-c2^j(\widetilde{R}-R)}
    2^{(\frac{n-1}{p}-2)j}
    \n{\Delta_j(u\otimes U + U \otimes \bar{u})}_{L^{\infty}_{x_n}(0,\infty;L^p_{x'})}
    \right)^r
    \right\}^{\frac{1}{r}}\\
    &\quad
    +
    C
    \left(
    \n{u}_{\widetilde{L^{\infty}_{x_n}}(\dB_{p,r}^{\frac{n-1}{p}-1})_{x'}}
    +
    \n{\bar{u}}_{\dB_{p,r}^{\frac{n-1}{p}-1}}
    \right)
    \n{U}_{\widetilde{L^{\infty}_{x_n}}(R,\infty;(\dB_{p,r}^{\frac{n-1}{p}-1})_{x'})}.
\end{align}
Letting $\widetilde{R} \to \infty$ via the dominated convergence theorem, we see that 
\begin{align}
    &
    \limsup_{\widetilde{R} \to \infty}
    \n{U}_{\widetilde{L^{\infty}_{x_n}}(\widetilde{R};\infty;\dB_{p,r}^{\frac{n-1}{p}-1})}\\
    &
    \quad
    \leq{}
    C
    \left(
    \n{u}_{\widetilde{L^{\infty}_{x_n}}(\dB_{p,r}^{\frac{n-1}{p}-1})_{x'}}
    +
    \n{\bar{u}}_{\dB_{p,r}^{\frac{n-1}{p}-1}}
    \right)
    \n{U}_{\widetilde{L^{\infty}_{x_n}}(R,\infty;(\dB_{p,r}^{\frac{n-1}{p}-1})_{x'})}.
\end{align}
Hence, taking the limit $R \to \infty$ and using the smallness conditions on $u$ and $\bar{u}$, we complete the proof.
\end{proof}

\noindent
{\bf Data availability.}\\
Data sharing not applicable to this article as no datasets were generated or analysed during the current study.

\noindent
{\bf Conflict of interest.}\\
The author has declared no conflicts of interest.

\noindent
{\bf Acknowledgements.} \\
The author was supported by Grant-in-Aid for Research Activity Start-up, Grant Number JP23K19011.
The author would like to express his sincere gratitude to Professor Yang Li, School of Mathematical Sciences and Center of Pure Mathematics, Anhui University, for many fruitful comments on this manuscript.
The author also thanks the anonymous referees for helpful suggestions and comments.

\begin{bibdiv}
\begin{biblist}
\bib{Bah-Che-Dan-11}{book}{
   author={Bahouri, Hajer},
   author={Chemin, Jean-Yves},
   author={Danchin, Rapha\"{e}l},
   title={Fourier analysis and nonlinear partial differential equations},
   series={Grundlehren der mathematischen Wissenschaften [Fundamental Principles of Mathematical Sciences]},
   volume={343},
   publisher={Springer, Heidelberg},
   date={2011},
   pages={xvi+523},
}
\bib{Cha-Jin-15}{article}{
   author={Chang, Tongkeun},
   author={Jin, Bum Ja},
   title={Solvability of the initial-boundary value problem of the
   Navier-Stokes equations with rough data},
   journal={Nonlinear Anal.},
   volume={125},
   date={2015},
   pages={498--517},
}
\bib{Cha-Jin-16}{article}{
   author={Chang, Tongkeun},
   author={Jin, Bum Ja},
   title={Initial and boundary value problem of the unsteady Navier-Stokes
   system in the half-space with H\"{o}lder continuous boundary data},
   journal={J. Math. Anal. Appl.},
   volume={433},
   date={2016},
   pages={1846--1869},
}
\bib{Can-Pla-Sch-00}{article}{
   author={Cannone, Marco},
   author={Planchon, Fabrice},
   author={Schonbek, Maria},
   title={Strong solutions to the incompressible Navier-Stokes equations in
   the half-space},
   journal={Comm. Partial Differential Equations},
   volume={25},
   date={2000},
   pages={903--924},
}
\bib{Che-Ler-95}{article}{
   author={Chemin, J.-Y.},
   author={Lerner, N.},
   title={Flot de champs de vecteurs non lipschitziens et \'{e}{q}uations de
   Navier--Stokes},
   language={French},
   journal={J. Differential Equations},
   volume={121},
   date={1995},
   pages={314--328},
}
\bib{Che-93}{article}{
   author={Chen, Zhi Min},
   title={$L^n$ solutions of the stationary and nonstationary Navier--Stokes
   equations in ${\bf R}^n$},
   journal={Pacific J. Math.},
   volume={158},
   date={1993},
   pages={293--303},
}
\bib{Cun-Oka-Tsu-22}{article}{
   author={Cunanan, Jayson},
   author={Okabe, Takahiro},
   author={Tsutsui, Yohei},
   title={Asymptotic stability of stationary Navier--Stokes flow in Besov
   spaces},
   journal={Asymptot. Anal.},
   volume={129},
   date={2022},
   pages={29--50},
}
\bib{Dan-00}{article}{
   author={Danchin, R.},
   title={Global existence in critical spaces for compressible Navier-Stokes
   equations},
   journal={Invent. Math.},
   volume={141},
   date={2000},
   pages={579--614},
}
\bib{Fer-Fer-13}{article}{
   author={de Almeida, Marcelo Fernandes},
   author={Ferreira, Lucas C. F.},
   title={On the Navier-Stokes equations in the half-space with initial and
   boundary rough data in Morrey spaces},
   journal={J. Differential Equations},
   volume={254},
   date={2013},
   pages={1548--1570},
}
\bib{Fujii-pre}{article}{
   author={Fujii, Mikihiro},
   title={Ill-posedness of the two-dimensional stationary Navier-Stokes
   equations on the whole plane},
   journal={Ann. PDE},
   volume={10},
   date={2024},
   pages={Paper No. 10, 34},
}
\bib{Fuj-61}{article}{
   author={Fujita, Hiroshi},
   title={On the existence and regularity of the steady-state solutions of
   the Navier--Stokes theorem},
   journal={J. Fac. Sci. Univ. Tokyo Sect. I},
   volume={9},
   date={1961},
   pages={59--102 (1961)},
}
\bib{Fuj-Kat-64}{article}{
   author={Fujita, Hiroshi},
   author={Kato, Tosio},
   title={On the Navier--Stokes initial value problem. I},
   journal={Arch. Rational Mech. Anal.},
   volume={16},
   date={1964},
   pages={269--315},
}
\bib{Gal-11}{book}{
   author={Galdi, G. P.},
   title={An introduction to the mathematical theory of the Navier-Stokes
   equations},
   series={Springer Monographs in Mathematics},
   edition={2},
   note={Steady-state problems},
   publisher={Springer, New York},
   date={2011},
   pages={xiv+1018},
}
\bib{Hey-70}{article}{
   author={Heywood, John G.},
   title={On stationary solutions of the Navier--Stokes equations as limits
   of nonstationary solutions},
   journal={Arch. Rational Mech. Anal.},
   volume={37},
   date={1970},
   pages={48--60},
}
\bib{Iwa-15}{article}{
   author={Iwabuchi, Tsukasa},
   title={Global solutions for the critical Burgers equation in the Besov
   spaces and the large time behavior},
   journal={Ann. Inst. H. Poincar\'{e} C Anal. Non Lin\'{e}aire},
   volume={32},
   date={2015},
   pages={687--713},
}
\bib{Kan-Koz-Shi-19}{article}{
   author={Kaneko, Kenta},
   author={Kozono, Hideo},
   author={Shimizu, Senjo},
   title={Stationary solution to the Navier--Stokes equations in the scaling
   invariant Besov space and its regularity},
   journal={Indiana Univ. Math. J.},
   volume={68},
   date={2019},
   pages={857--880},
}
\bib{Koz-87}{article}{
   author={Kozono, Hideo},
   title={Global $L^n$-solution and its decay property for the Navier-Stokes
   equations in half-space ${\bf R}^n_+$},
   journal={J. Differential Equations},
   volume={79},
   date={1989},
   pages={79--88},
   issn={0022-0396},
}
\bib{Koz-Shi-23}{article}{
   author={Kozono, Hideo},
   author={Shimizu, Senjo},
   title={Stability of stationary solutions to the Navier–Stokes equations in the Besov space},
   journal={Math. Nachr.},
   volume={296},
   date={2023},
   pages={1964--1982},
}
\bib{Koz-Yam-95-PJA}{article}{
   author={Kozono, Hideo},
   author={Yamazaki, Masao},
   title={Small stable stationary solutions in Morrey spaces of the
   Navier--Stokes equations},
   journal={Proc. Japan Acad. Ser. A Math. Sci.},
   volume={71},
   date={1995},
   pages={199--201},
}
\bib{Koz-Yam-95-IUMJ}{article}{
   author={Kozono, Hideo},
   author={Yamazaki, Masao},
   title={The stability of small stationary solutions in Morrey spaces of
   the Navier--Stokes equations},
   journal={Indiana Univ. Math. J.},
   volume={44},
   date={1995},
   pages={1307--1336},
}
\bib{Lad-59}{article}{
   author={Lady\v{z}enskaya, O. A.},
   title={Investigation of the Navier--Stokes equations for stationary motion
   of an incompressible fluid},
   language={Russian},
   journal={Uspehi Mat. Nauk},
   volume={14},
   date={1959},
   pages={75--97},
}
\bib{Ler-33}{book}{
   author={Leray, Jean},
   title={\'{E}tude de diverses \'{e}{q}uations int\'{e}grales non lin\'{e}aires et de
   quelques probl\`emes que pose l'hydrodynamique},
   language={French},
   publisher={NUMDAM, [place of publication not identified]},
   date={1933},
   pages={82},
}
\bib{Lew-72}{article}{
   author={Lewis, Jeff E.},
   title={The initial-boundary value problem for the Navier-Stokes equations
   with data in $L\sp{p}$},
   journal={Indiana Univ. Math. J.},
   volume={22},
   date={1972/73},
   pages={739--761},
}
\bib{Saw-18}{book}{
   author={Sawano, Yoshihiro},
   title={Theory of Besov spaces},
   series={Developments in Mathematics},
   volume={56},
   publisher={Springer, Singapore},
   date={2018},
}
\bib{Tsu-19-JMAA}{article}{
   author={Tsurumi, Hiroyuki},
   title={Ill-posedness of the stationary Navier--Stokes equations in Besov
   spaces},
   journal={J. Math. Anal. Appl.},
   volume={475},
   date={2019},
   pages={1732--1743},
}
\bib{Tsu-19-N}{article}{
   author={Tsurumi, Hiroyuki},
   title={Well-posedness and ill-posedness of the stationary Navier--Stokes
   equations in toroidal Besov spaces},
   journal={Nonlinearity},
   volume={32},
   date={2019},
   pages={3798--3819},
   issn={0951-7715},
}
\bib{Tsu-19-DIE}{article}{
   author={Tsurumi, Hiroyuki},
   title={The stationary Navier--Stokes equations in the scaling invariant
   Triebel--Lizorkin spaces},
   journal={Differential Integral Equations},
   volume={32},
   date={2019},
   pages={323--336},
}
\bib{Tsu-19-ARMA}{article}{
   author={Tsurumi, Hiroyuki},
   title={Well-posedness and ill-posedness problems of the stationary
   Navier--Stokes equations in scaling invariant Besov spaces},
   journal={Arch. Ration. Mech. Anal.},
   volume={234},
   date={2019},
   pages={911--923},
}
\bib{Uka-87}{article}{
   author={Ukai, Seiji},
   title={A solution formula for the Stokes equation in $\mathbb{R}^n_{+}$},
   journal={Comm. Pure Appl. Math.},
   volume={40},
   date={1987},
   pages={611-621},
}
\bib{Vos-96}{book}{
   author={Voss, Karl Alvin},
   title={Self-similar solutions of the Navier-Stokes equation},
   note={Thesis (Ph.D.)--Yale University},
   publisher={ProQuest LLC, Ann Arbor, MI},
   date={1996},
   pages={64},
}
\bib{Wat-24}{article}{
   author={Watanabe, Keiichi},
   title={Global well-posedness of the Navier–Stokes equations in homogeneous Besov spaces on the half-space},
   journal={J. Math. Anal. Appl.},
   volume={530},
   date={2024},
   }
\bib{Yam-00}{article}{
   author={Yamazaki, Masao},
   title={The Navier-Stokes equations in the weak-$L^n$ space with
   time-dependent external force},
   journal={Math. Ann.},
   volume={317},
   date={2000},
   pages={635--675},
}
\end{biblist}
\end{bibdiv}

\end{document}